\newacronym{srg}{SRG}{\textit{scaled relative graph}}
\newcommand{\notation}[1]{\textcolor{orange!80!black}{\emph{#1}}}
\newcommand{\abs}[1]{\ensuremath{\left\vert#1\right\vert}}
\newcommand{\cfunof}[1]{\ensuremath{\left\{#1\right\}}}
\newcommand{\C}{\ensuremath{\mathbb{C}}}
\newcommand{\cl}[1]{\ensuremath{\mathrm{cl}\,{#1}}}
\newcommand{\diag}[1]{\ensuremath{\mathrm{diag}\left({#1}\right)}}
\newcommand{\funof}[1]{\ensuremath{\left(#1\right)}}
\newcommand{\F}{\ensuremath{\mathbb{F}}}
\newcommand{\graph}[1]{\ensuremath{\mathrm{gra}\,{#1}}}
\newcommand{\hilb}{\ensuremath{\mathcal{H}}}
\newcommand{\hull}[1]{\ensuremath{\mathrm{co}\left(#1\right)}}
\newcommand{\hullbk}[1]{\ensuremath{\mathrm{co}_{\mathrm{Be\text{-}}\!\mathrm{Kl}}\left(#1\right)}}
\newcommand{\hvect}[1]{\ensuremath{\mathbf{#1}}}
\newcommand{\imag}[1]{\ensuremath{\mathrm{Im}\left(#1\right)}}
\newcommand{\inner}[2]{\ensuremath{\left\langle #1,#2\right\rangle}}
\newcommand{\innerm}[1]{\ensuremath{\mathrm{m}\left(#1\right)}}
\newcommand{\ltwo}{\ensuremath{\ell^2\left(\mathbb{N}\right)}}
\newcommand{\Ltwo}{\ensuremath{\mathcal{L}^2\left(\mathbb{R}\right)}}
\newcommand{\norm}[1]{\ensuremath{\left\Vert #1 \right\Vert}}
\newcommand{\real}[1]{\ensuremath{\mathrm{Re}\left(#1\right)}}
\newcommand{\ri}[1]{\ensuremath{\mathrm{d}_\mathrm{s}\left(#1\right)}}
\newcommand{\ro}[1]{\ensuremath{\mathrm{d}_\mathrm{l}\left(#1\right)}}
\newcommand{\R}{\ensuremath{\mathbb{R}}}
\newcommand{\spectrum}[1]{\ensuremath{\sigma\left(#1\right)}}
\newcommand{\spectrumap}[1]{\ensuremath{\sigma_{\mathrm{ap}}\left(#1\right)}}
\newcommand{\srg}[1]{\ensuremath{\mathrm{SRG}\left(#1\right)}}
\newcommand{\srgc}[1]{\ensuremath{\mathrm{cl}\;\mathrm{SRG}\left(#1\right)}}
\newcommand{\tm}{\ensuremath{\left(t\right)}}
\newcommand{\w}{\ensuremath{\left(\omega\right)}}
\newcommand{\W}[1]{\ensuremath{\mathrm{W}\left(#1\right)}}
\newcommand{\Wc}[1]{\ensuremath{\mathrm{cl}\;\mathrm{W}\left(#1\right)}}
\numberwithin{equation}{section}
\newtheorem{theorem}{Theorem}
\newtheorem{lemma}{Lemma}
\theoremstyle{remark}
\newtheorem{remark}{Remark}
\newtheorem{example}{Example}
\tikzset{>=stealth}
\pgfplotsset{compat=1.13}
\def\hyper@x#1,#2\relax{#1}
\def\hyper@y#1,#2\relax{#2}
\def\hyper@coords#1{#1}
\newif\ifhyper@vertical
\def\hyper@computer#1#2{%
  \edef\hyper@toscan{(#1)}
  \tikz@scan@one@point\hyper@coords\hyper@toscan
  \edef\hyper@sx{\the\pgf@x}
  \edef\hyper@sy{\the\pgf@y}
  \edef\hyper@toscan{(#2)}
  \tikz@scan@one@point\hyper@coords\hyper@toscan
  \edef\hyper@ex{\the\pgf@x}
  \edef\hyper@ey{\the\pgf@y}
  \pgfmathsetmacro{\hyper@mx}{(\hyper@ex + \hyper@sx)/2}
  \pgfmathsetmacro{\hyper@my}{(\hyper@ey + \hyper@sy)/2}
  \pgfmathsetmacro{\hyper@dx}{\hyper@ex - \hyper@sx}
  \pgfmathparse{\hyper@dx == 0 ? "\noexpand\hyper@verticaltrue" : "\noexpand\hyper@verticalfalse"}
  \pgfmathresult
  \ifhyper@vertical
  \edef\hyper@cmd{-- (\tikztotarget)}
  \else
  \pgfmathsetmacro{\hyper@dy}{\hyper@ey - \hyper@sy}
  \pgfmathsetmacro{\hyper@t}{\hyper@my/\hyper@dx}
  \pgfmathsetmacro{\hyper@cx}{\hyper@mx + \hyper@t * \hyper@dy}
  \pgfmathsetmacro{\hyper@radius}{veclen(\hyper@cx - \hyper@sx, \hyper@sy)}
  \pgfmathsetmacro{\hyper@sangle}{180 - atan2(\hyper@sy,\hyper@cx-\hyper@sx)}
  \pgfmathsetmacro{\hyper@eangle}{180 - atan2(\hyper@ey,\hyper@cx-\hyper@ex)}
  \edef\hyper@cmd{arc[radius=\hyper@radius pt, start angle=\hyper@sangle, end angle=\hyper@eangle]}
  \fi
}
\def\hyper@disc@computer#1#2{%
  \edef\hyper@toscan{(#1)}
  \tikz@scan@one@point\hyper@coords\hyper@toscan
  \edef\hyper@sx{\the\pgf@x}
  \edef\hyper@sy{\the\pgf@y}
  \edef\hyper@toscan{(#2)}
  \tikz@scan@one@point\hyper@coords\hyper@toscan
  \edef\hyper@ex{\the\pgf@x}
  \edef\hyper@ey{\the\pgf@y}
  \pgfmathsetmacro{\hyper@det}{\hyper@sx * \hyper@ey - \hyper@sy * \hyper@ex}
  \pgfmathparse{\hyper@det == 0 ? "\noexpand\hyper@verticaltrue" : "\noexpand\hyper@verticalfalse"}
  \pgfmathresult
  \ifhyper@vertical
  \edef\hyper@cmd{-- (\tikztotarget)}
  \else
  \pgfmathsetmacro{\hyper@mx}{(\hyper@ex + \hyper@sx)/2}
  \pgfmathsetmacro{\hyper@my}{(\hyper@ey + \hyper@sy)/2}
  \pgfmathsetmacro{\hyper@dx}{\hyper@ex - \hyper@sx}
  \pgfmathsetmacro{\hyper@dy}{\hyper@ey - \hyper@sy}
  \pgfmathsetmacro{\hyper@dradius}{\pgfkeysvalueof{/tikz/hyperbolic disc radius}}
  \pgfmathsetmacro{\hyper@t}{((\hyper@dradius)^2 - \hyper@sx * \hyper@ex - \hyper@sy * \hyper@ey)/(2 * (\hyper@sx * \hyper@ey - \hyper@sy * \hyper@ex))}
  \pgfmathsetmacro{\hyper@radius}{sqrt((\hyper@t)^2 + .25) * veclen(\hyper@dx,\hyper@dy)}
  \pgfmathsetmacro{\hyper@cx}{\hyper@mx + \hyper@t * \hyper@dy}
  \pgfmathsetmacro{\hyper@cy}{\hyper@my - \hyper@t * \hyper@dx}
  \pgfmathsetmacro{\hyper@sangle}{atan2(\hyper@sy-\hyper@cy,\hyper@sx - \hyper@cx)}
  \pgfmathsetmacro{\hyper@eangle}{atan2(\hyper@ey-\hyper@cy,\hyper@ex - \hyper@cx)}
  \pgfmathsetmacro{\hyper@eangle}{\hyper@eangle > \hyper@sangle + 180 ? \hyper@eangle - 360 : \hyper@eangle}
  \edef\hyper@cmd{arc[radius=\hyper@radius pt, start angle=\hyper@sangle, end angle=\hyper@eangle]}
\fi
}
\def\hyper@plane@tangent#1#2{%
  \edef\hyper@toscan{(#1)}
  \tikz@scan@one@point\hyper@coords\hyper@toscan
  \edef\hyper@sx{\the\pgf@x}
  \edef\hyper@sy{\the\pgf@y}
  \edef\hyper@toscan{(#2)}
  \tikz@scan@one@point\hyper@coords\hyper@toscan
  \edef\hyper@ex{\the\pgf@x}
  \edef\hyper@ey{\the\pgf@y}
  \pgfmathsetmacro{\hyper@ex}{\hyper@ex - \hyper@sx}
  \pgfmathsetmacro{\hyper@ey}{\hyper@ey - \hyper@sy}
  \pgfmathparse{\hyper@ex == 0 ? "\noexpand\hyper@verticaltrue" : "\noexpand\hyper@verticalfalse"}
  \pgfmathresult
  \ifhyper@vertical
  \pgfmathsetmacro{\hyper@d}{\hyper@ey/1cm}
  \pgfmathsetmacro{\hyper@radius}{\hyper@sy * exp(\hyper@d) - \hyper@sy}
  \edef\hyper@cmd{-- ++(0,\hyper@radius pt)}
  \else
  \pgfmathsetmacro{\hyper@d}{\hyper@ex > 0 ? veclen(\hyper@ex,\hyper@ey) : -veclen(\hyper@ex,\hyper@ey)}
  \pgfmathsetmacro{\hyper@radius}{abs(\hyper@sy * \hyper@d / \hyper@ex)}
  \pgfmathsetmacro{\hyper@sangle}{90 + atan(\hyper@ey/\hyper@ex)}
  \pgfkeysgetvalue{/tikz/hyperbolic plane target angle}{\hyper@eangle}
  \ifx\hyper@eangle\pgfutil@empty
  \pgfmathsetmacro{\hyper@d}{\hyper@d/1cm}
  \pgfmathsetmacro{\hyper@ey}{\hyper@ey/1cm}
  \pgfmathsetmacro{\hyper@tanhd}{tanh(\hyper@d)}
  \pgfmathsetmacro{\hyper@eangle}{acos((\hyper@d * \hyper@tanhd - \hyper@ey)/(\hyper@d - \hyper@ey * \hyper@tanhd))}
  \fi
  \edef\hyper@cmd{arc[radius=\hyper@radius pt, start angle=\hyper@sangle, end angle=\hyper@eangle]}
\fi
}
\tikzset{%
  hyperbolic disc radius/.initial={1cm},
  hyperbolic plane/.style={
    to path={
      \pgfextra{\hyper@computer\tikztostart\tikztotarget}
      \hyper@cmd
    }
  },
  hyperbolic plane tangent/.style={
    to path={
      \pgfextra{\hyper@plane@tangent\tikztostart\tikztotarget}
      \hyper@cmd
    }
  },
  hyperbolic disc/.style={
    to path={
      \pgfextra{\hyper@disc@computer\tikztostart\tikztotarget}
      \hyper@cmd
    }
  },
  hyperbolic plane target angle/.initial={},
}
\begin{document}

\title{The Scaled Relative Graph of a Linear Operator}
\author{Richard Pates}

\thanks{Department of Automatic Control, Lund University, Box 118, SE-221 00, Lund, Sweden. \textit{E-mail:} \href{mailto:richard.pates@control.lth.se}{\texttt{richard.pates@control.lth.se}}}

\thanks{The author is a member of the ELLIIT Strategic Research Area at Lund University. This work was supported by the ELLIIT Strategic Research Area. This project has received funding from VR 2016-04764, SSF RIT15-0091 and ERC grant agreement No 834142.}

\subjclass[2021]{Primary 47A11, 47A12; Secondary 51M15}
\date{June 10, 2011 and, in revised form, ****.}

\maketitle

\begin{abstract}
The \gls{srg} of an operator is a subset of the complex plane. It captures several salient features of an operator, such as contractiveness, and can be used to reveal the geometric nature of many of the inequality based arguments used in the convergence analyses of fixed point iterations. In this paper we show that the \gls{srg} of a linear operator can be determined from the numerical range of a closely related linear operator. Furthermore we demonstrate that the \gls{srg} of a linear operator has a range of spectral and convexity properties, and satisfies an analogue of Hildebrant's theorem.
\end{abstract}

\glsresetall
\section{Introduction}
The \gls{srg} was introduced by Ryu, Hannah and Yin in \cite{RHY19} as a geometric tool for the modular analysis of operators. The \gls{srg} of an operator is a subset of the complex plane that captures a number of important features of the operator, such as whether or not it is contractive. The \glspl{srg} of simpler operators can be combined in an intuitive graphical manner to bound the \glspl{srg} of the operators resulting from their algebraic composition. These rules for combining operators can be used as geometric analogues of the inequalities typically used in, for example, the convergence proofs of fixed point iterations. This has been used to give a unified geometric treatment of the convergence rates of a wide range of algorithms, including gradient descent, Douglas-Rachford splitting and the method of alternating projections.

The promise of the \gls{srg} extends far beyond the analysis of algorithms from convex optimization. As already noted in \cite{CFS21}, the modular fashion in which the \gls{srg} can be manipulated makes it an ideal candidate for dynamical system analysis, and the authors additionally give preliminary results connecting the \gls{srg} to classical tools from control theory. In order to unlock this potential, a better understanding of how to determine the \gls{srg} of an operator is required. For example, even the question of how to determine the \gls{srg} when the operator is a square matrix with real entries has only been fully resolved in the case that the matrix is normal, or of dimension 2 \cite{HRY20}.

Our primary motivation is to better understand the geometry of the \gls{srg}, building our intuition from the finite dimensional linear case, where the operators in question are matrices. However, from a theoretical perspective, the results from the matrix case can be pushed through to the case of linear operators on Hilbert spaces with little to no changes. Since such operators are relevant in a wide range of applications, particularly in the study of differential equations, this is the setting we will consider. Our main result is to show that the \gls{srg} of a linear operator can be determined from the numerical range of a closely related linear operator. This allows much of the machinery that has been developed to understand the numerical range to be applied in the \gls{srg} setting. We use this to show that the \gls{srg}, like the numerical range, has a range of convexity and spectral properties, and satsifies an analogue of Hildebrant's theorem \cite{Hil66}. Despite these similarities, the convexity properties of the \gls{srg} are rooted in hyperbolic geometry, and its spectral properties capture information about the approximate point spectrum rather than the spectrum.

\Cref{sec:2} introduces the relevant concepts from the theory of linear operators and hyperbolic geometry, and also reviews the definition of the \gls{srg} and known results on the \gls{srg} of a matrix. In \cref{sec:3} we relate the \gls{srg} to the numerical range. \Cref{sec:31} establishes the connections in the case of complex Hilbert spaces. In this subsection we also characterise the spectral and convexity properties of the \gls{srg}, derive the analogue of Hildebrant's theorem, and show how to plot the boundary of the \gls{srg} of an operator defined either by a matrix or a linear differential equation. Finally in \cref{sec:32} we show how to determine the \gls{srg} of a linear operator on a real Hilbert space using the results from \cref{sec:31}.

\section{Notation and preliminaries}\label{sec:2}

\subsection{Basic notation}

Throughout \F{} will denote either the \hypertarget{realfield}{\notation{real field}}, \R{}, or the \hypertarget{complexfield}{\notation{complex field}}, \C{}. When speaking geometrically we will also refer to \C{} as the \hypertarget{complexplane}{\notation{complex plane}}.  The \hypertarget{complexconjugate}{\notation{complex conjugate}} of $z\in\C$ will be denoted by $\bar{z}$. A set $s$ is said to be \hypertarget{convex}{\notation{convex}} if $ts_1+\funof{1-t}s_2\in{}s$ for all $0\leq{}t\leq{}1$ and $s_1,s_2\in{}s$, and the \hypertarget{closure}{\notation{closure}} of $s$ is denoted by $\cl{s}$. Furthermore, the \hypertarget{convexhull}{\notation{convex hull}} $\hull{s}$ is defined to be the smallest \hyperlink{convex}{convex} set containing $s$, and the \hypertarget{boundary}{\notation{boundary}} of a set $s\subseteq\C$ will be denoted by $\partial{}s$ ($\cl{s}\cap\cl{\cfunof{\C\setminus{}s}}$). We will overload notation as appropriate to apply to sets, for example $\overline{\cfunof{z_1,z_2}}$ will denote the set \cfunof{\overline{z}_1,\overline{z}_2}, and more generally $h\funof{s}=\cfunof{h\funof{z}:z\in{}s}$.

\subsection{Operators on Hilbert spaces}\label{sec:11}

$\hilb$ denotes a \hypertarget{Hilbertspace}{\notation{Hilbert space}} over the \hyperlink{realfield}{field} \F{}, equipped with an \hypertarget{innerproduct}{\notation{inner product}} $\langle\cdot{,}\cdot\rangle:\hilb\times{}\hilb\rightarrow{}\F{}$ which defines a \hypertarget{norm}{\notation{norm}} $\norm{\cdot{}}=\sqrt{\langle\cdot{,}\cdot\rangle}$. $T:\hilb\rightarrow{}\hilb$ will be called a \hypertarget{boundedlinearoperator}{\notation{linear operator}} if it is linear, and $\sup\cfunof{\norm{T\hvect{x}}:\hvect{x}\in\hilb,\norm{\hvect{x}}=1}<\infty$. The \hypertarget{identityoperator}{\notation{identity operator}} will be denoted by $I$ ($I\hvect{x}=\hvect{x}$ for all $\hvect{x}\in\hilb$).

To illustrate our results we will primarily consider the cases that
\begin{enumerate}
  \item $\hilb$ is $\R^n$, equipped with the \hyperlink{innerproduct}{inner product} $\inner{\hvect{y}}{\hvect{x}}=\hvect{x}^\mathsf{T}\hvect{y}$;
  \item $\hilb$ is $\C^n$, equipped with the \hyperlink{innerproduct}{inner product} $\inner{\hvect{y}}{\hvect{x}}=\overline{\hvect{x}}^\mathsf{T}\hvect{y}$;
\end{enumerate}
in which case the \hyperlink{boundedlinearoperator}{linear operators} correspond to the square matrices with entries in \R{} and \C{} respectively. We will also consider the \hyperlink{Hilbertspace}{Hilbert space} of complex valued Lebesgue \hypertarget{squareintegrablefunction}{\notation{square integrable functions}} $\Ltwo$, with \hyperlink{innerproduct}{inner product}
\[
\inner{\hvect{y}}{\hvect{x}}=\int_{\R}\overline{\hvect{x}\tm{}}^\mathsf{T}\hvect{y}\tm\,dt,\;\hvect{x},\hvect{y}\in\Ltwo,
\]
and the \hyperlink{Hilbertspace}{Hilbert space} of complex valued \hypertarget{squaresummable}{\notation{square summable sequences}} $\ltwo$, with \hyperlink{innerproduct}{inner product}
\[
\inner{\hvect{y}}{\hvect{x}}=\sum_{j\in\mathbb{N}}\overline{\hvect{x}}_j\hvect{y}_j,\;\hvect{x},\hvect{y}\in\ltwo.
\]
We define the \hypertarget{graph}{\notation{graph}} of a \hyperlink{boundedlinearoperator}{linear operator} $T$ as
\[
\graph{T}=\cfunof{\funof{\hvect{x},T\hvect{x}}:\hvect{x}\in\hilb},
\]
and denote the \hypertarget{adjoint}{\notation{adjoint}} of $T$ as $T^*$ ($\inner{T\hvect{x}}{\hvect{y}}=\inner{\hvect{x}}{T^*\hvect{y}}$ for all $\hvect{x},\hvect{y}\in\hilb$). $T$ is said to be \hypertarget{boundedlyinvertible}{\notation{invertible}} if there is a \hyperlink{boundedlinearoperator}{linear operator} $S$ such that $TS=ST=I$, and we denote this inverse as $T^{-1}$. The \hypertarget{extendedspectrum}{\notation{spectrum}} of $T$ defined to be the subset of the \hyperlink{complexplane}{complex plane}
\[
\spectrum{T}=\cfunof{z:z\in\F,\funof{T-zI}\text{ is not \hyperlink{boundedlyinvertible}{invertible}}}.
\]
We additionally say that $\lambda\in\spectrum{T}$ is in the \hypertarget{approximatepointspectrum}{\notation{approximate point spectrum}} ($\lambda\in\spectrumap{T}$) if there exist a sequence of unit vectors such that $\lim_{n\rightarrow{}\infty}\norm{\funof{T-\lambda{}I}\hvect{x}_n}=0$. In the matrix case $\spectrum{T}=\spectrumap{T}$, and $\lambda$ is an eigenvalue of $T$ if and only if $\lambda\in\spectrum{T}$.

\subsection{The scaled relative graph}\label{sec:21}

We define the \gls{srg} of a \hyperlink{boundedlinearoperator}{linear operator} $T$ to be the subset of the \hyperlink{complexplane}{complex plane}
\[
\srg{T}=\cfunof{\frac{\norm{\hvect{y}}}{\norm{\hvect{x}}}\exp\funof{\pm{}i\arccos\funof{\frac{\real{\inner{\hvect{y}}{\hvect{x}}}}{\norm{\hvect{y}}\norm{\hvect{x}}}}}:\hvect{x}\in\hilb,\hvect{y}=T\hvect{x},\norm{\hvect{x}}=1}.
\]
For \hyperlink{boundedlinearoperator}{linear operators} this definition coincides with the more general definition of the \gls{srg} from \cite{RHY19}. It follows from the definition of the \gls{srg} that $T$ is contractive if and only if $\srg{T}$ is contained in the closed unit disk.

The \gls{srg} captures some of the geometric features of the input-output pairs of the operator. Recall that the angle $\theta$ between $\hvect{x}\in\hilb$ and $\hvect{y}\in\hilb$ is typically defined through
\begin{equation}\label{eq:polar}
\cos\theta=\frac{\real{\inner{\hvect{y}}{\hvect{x}}}}{\norm{\hvect{y}}\norm{\hvect{x}}}.
\end{equation}
The \gls{srg} is then the union of the `polar representations' of the input-output pairs $\funof{\hvect{x},\hvect{y}}\in\graph{T}$, in which the magnitude is given by the ratio between the \hyperlink{norm}{norms} of the output and input, and the argument the angle between the input and output.


\subsection{The Beltrami-Klein mapping}\label{sec:23}

\begin{figure}

\subfloat[$z$-plane\label{local_label}]
{
\begin{tikzpicture}[scale=2]

\begin{scope}
\clip (-1.9,-1.4) rectangle (1.9,1.4);
\foreach \x in {-1.5,-.5,...,1.5}
  \draw[black!50] (\x,-2) -- (\x,2);

\foreach \x in {-.75,0,.75}
  \draw[orange!50,thick] (\x,0) circle (1);

\foreach \x in {-2,-1.5,...,2}
  \draw[blue!50] (\x,0) circle (.5);

\end{scope}

\draw[->] (-2,0) -- (2,0);
\draw[->] (0,-1.5) -- (0,1.5);

\end{tikzpicture}
}

\subfloat[$z'\!$-plane\label{local_label}]
{
\begin{tikzpicture}[scale=2]

\draw[->] (-2,0) -- (2,0);
\draw[->] (0,-1.5) -- (0,1.5);

\foreach \x in {-1.5,-.5,...,1.5}
  \draw[black!50] ({bkx(\x,0)},{bky(\x,0)})--(1,0);

\foreach \x in {-.75,0,.75}
  \draw[orange!50,thick] ({bkx(\x-1,0)},{bky(\x-1,0)})--({bkx(\x+1,0)},{bky(\x+1,0)});

\foreach \x in {-2,-1.5,...,2}
  \draw[blue!50] ({bkx(\x-.5,0)},{bky(\x-.5,0)})--({bkx(\x+.5,0)},{bky(\x+.5,0)});

\draw (0,0) circle (1);

\end{tikzpicture}
}
\caption{\label{fig:bk} Illustration of the \protect\hyperlink{beltramikleinmapping}{Beltrami-Klein mapping}. The generalised circles centred on the real axis in {\footnotesize \textsc{({\tiny A})}} are mapped by $z'=f\funof{z}$ to the chords of the unit circle in {\footnotesize \textsc{({\tiny B})}}. The chords are sent back to their corresponding generalised circles by $z=g\funof{z'}$.}
\end{figure}

The \hypertarget{beltramikleinmapping}{\notation{Beltrami-Klein mapping}} is a tool from two dimensional hyperbolic geometry. Its importance in the context of the \gls{srg} was first recognised in \cite{HRY20}, where it was used in the construction of the \gls{srg} for normal matrices with real entries. We will now introduce the relevant concepts and review these results. The \hyperlink{beltramikleinmapping}{Beltrami-Klein mapping} maps the \hyperlink{complexplane}{complex plane} into the closed unit disk through
\[
f\funof{z}=\frac{\funof{\overline{z}-i}\funof{z-i}}{1+\overline{z}z}.
\]
This mapping sends generalised circles centred on the real axis onto chords of the unit circle, as illustrated in \Cref{fig:bk}. We will also need to apply this function to \hyperlink{boundedlinearopeartor}{linear operators}, in which case it will be understood that
\[
f\funof{T}=\funof{I+T^*T}^{-\frac{1}{2}}\funof{T^*-iI}\funof{T-iI}\funof{I+T^*T}^{-\frac{1}{2}}.
\]
Note that $f\funof{z}$ is not bijective, since $f\funof{z}=f\funof{\overline{z}}$. However, for any $z$ for which $\imag{z}\geq{}0$,
\[
z=\frac{\imag{f\funof{z}}+i\sqrt{1-\abs{f\funof{z}}^2}}{\real{f\funof{z}}-1}.
\]
This relation motivates the definition of
\[
g\funof{z}=\cfunof{\frac{\imag{z}\pm{}i\sqrt{1-\abs{z}^2}}{\real{z}-1}}.
\]
This map sends each point in the closed unit disk back to the corresponding \hyperlink{complexconjugate}{complex conjugate} pair ($g\funof{f\funof{z}}=\cfunof{z,\overline{z}}$). Since $\srg{T}=\overline{\srg{T}}$, this establishes that
\begin{equation}\label{eq:sim}
\srg{T}=g\funof{f\funof{\srg{T}}}.
\end{equation}
As we will see in the next section, $f\funof{\srg{T}}$ is in many ways simpler to understand than \srg{T}. Equation~\eqref{eq:sim} then shows that we can always convert a result on $f\funof{\srg{T}}$ back to a result on $\srg{T}$ using $g\funof{\cdot}$. 

This pattern of obtaining a simplified analysis of $f\funof{\srg{T}}$ can also be seen in the main result of \cite{HRY20}. Introducing the notation $\hullbk{\cdot{}}=g\funof{\hull{f\funof{\cdot}}}$, there it was shown that if $T$ is a matrix with real entries (acting on a real \hyperlink{Hilbertspace}{Hilbert space}) and $T=T^\mathsf{T}$, then $f\funof{\srg{T}}=\hull{f\funof{\spectrum{T}}}$, or equivalently
\[
\srg{T}=\hullbk{\spectrum{T}}.
\]
Furthermore the above remains true for $TT^\mathsf{T}=T^\mathsf{T}T$ with the understanding that \spectrum{T} denotes the \hyperlink{extendedspectrum}{spectrum} of $T$ when viewed on the corresponding complex \hyperlink{Hilbertspace}{Hilbert space} (i.e. look at all the eigenvalues of $T$ in $\C$, not just those in the underlying \hyperlink{realfield}{field} \R{} of the operator).

\begin{figure}

\begin{tikzpicture}[every to/.style={hyperbolic plane},scale=2]

\draw (-1.2,0.2) to (1,0.5);
\draw (-1.2,-0.2) to (1,-0.5);

\draw[fill=black] (-1.2,0.2) circle (0.625pt) node[left] {$z_1$};
\draw[fill=black] (1,-.5) circle (0.625pt) node[right] {$z_2$};
\draw (-1.2,-0.2) node[left] {$\overline{z}_1$};
\draw (1,.5) node[right] {$\overline{z}_2$};

\draw[->] (-2,0) -- (2,0);
\draw[->] (0,-1.5) -- (0,1.5);

\end{tikzpicture}
\caption{\label{fig:pc}The hyperbolic straight line between two points $z_1,z_2$ consists of two circular arcs under the Pointcar\'{e} half-plane model.}
\end{figure}

\begin{remark}\label{rem:pc}
Given a set $s\subseteq\C$, $\hullbk{s}$ behaves like the \hyperlink{convexhull}{convex hull}, however with the notion of a straight line taken from hyperbolic geometry under the Poincar\'{e} half-plane model. First recall that $\hull{s}$ is equal to the set of all points that lie on a straight line between $z_1,z_2\in{}s$. In the Poincar\'{e} half-plane model (adapting things slightly for our needs), the straight line between two points $z_1,z_2$ consists of the two arc segments of the generalised circle centred on the real axis that passes through the points $\cfunof{z_1,z_2,\overline{z}_1,\overline{z}_2}$ that:
\begin{enumerate}
  \item connect a pair of points in $\cfunof{z_1,z_2,\overline{z}_1,\overline{z}_2}$;
  \item do not intersect the real axis.
\end{enumerate}
This is illustrated in \Cref{fig:pc}. The set $\hullbk{s}$ is then the set of all points that lie on a  hyperbolic straight line under the Poincar\'{e} half-plane model between $z_1,z_2\in{}s$.
\end{remark}

\subsection{The numerical range}\label{sec:25}

The \hyperlink{numericalrange}{numerical range} is a classical object in the study of linear operators on complex \hyperlink{Hilbertspace}{Hilbert spaces}. For a \hyperlink{boundedlinearoperator}{linear operator} it is defined to be the subset of the \hyperlink{complexplane}{complex plane}
\[
\W{T}=\cfunof{\inner{T\hvect{x}}{\hvect{x}}:\hvect{x}\in\hilb,\norm{\hvect{x}}=1}.
\]
A nice introduction to the \hyperlink{numericalrange}{numerical range}
can be found in \cite{Sha17,HJ91}. The following facts about the \hyperlink{numericalrange}{numerical range} of a \hyperlink{boundedlinearoperator}{linear operator} are standard:
\begin{enumerate}[i)]
	\item $\W{T}=\hull{\W{T}}$ ($\W{T}$ is \hyperlink{convex}{convex});
  \item if $TT^*=T^*T$, then $\Wc{T}=\hull{\spectrum{T}}$ (if $T$ is \hypertarget{normal}{\notation{normal}}, the \hyperlink{closure}{closure} of $\W{T}$ equals the \hyperlink{convexhull}{convex hull} of the \hyperlink{extendedspectrum}{spectrum} of $T$);
	\item $\Wc{T}\supseteq{}\spectrum{T}$ (the \hyperlink{closure}{closure} of $\W{T}$ contains the \hyperlink{extendedspectrum}{spectrum} of $T$).
\end{enumerate}
More generally, the similarity invariance of the \hyperlink{extendedspectrum}{spectrum} implies that the \hyperlink{convexhull}{convex hull} of the \hyperlink{extendedspectrum}{spectrum} is also contained in $\Wc{STS^{-1}}$, and hence in the intersection of the sets $\Wc{STS^{-1}}$ for all choices of $S$. An elegant result of Hildebrant \cite{Hil66} shows that this containment is tight, in the sense that
\begin{enumerate}[i)]\addtocounter{enumi}{3}
	\item $\hull{\spectrum{T}}=\bigcap\cfunof{\Wc{STS^{-1}}:S,S^{-1}\text{ are \hyperlink{boundedlinearoperator}{linear operators}}}.$
\end{enumerate}

\section{Results}\label{sec:3}

\subsection{Connection to the numerical range}\label{sec:31}

In this subsection we connect the \gls{srg} to the \hyperlink{numericalrange}{numerical range}. The following theorem shows that the \gls{srg} of a \hyperlink{boundedlinearoperator}{linear operator} $T$ on a complex \hyperlink{Hilbertspace}{Hilbert space} can be obtained from the \hyperlink{numericalrange}{numerical range} of $f\funof{T}$. Furthermore $\srg{T}$ is endowed with \hyperlink{convex}{convexity} and \hyperlink{extendedspectrum}{spectral} properties along the lines of i)--iv) from \cref{sec:25}, with two main differences.
\begin{enumerate}
  \item The notion of \hyperlink{convex}{convexity} is taken with respect to the Poincar\'{e} half-plane model, as explained in \Cref{rem:pc} (i.e. replace $\hull{\cdot}$ with $\hullbk{\cdot}$).
  \item The \hyperlink{extendedspectrum}{spectral} properites pertain to the \hyperlink{approximatepointspectrum}{approximate point spectrum} instead of the \hyperlink{extendedspectrum}{spectrum} (i.e. replace $\spectrum{T}$ with $\spectrumap{T}$).
\end{enumerate}
This gives the \gls{srg} a similar geometrical flavour to the \hyperlink{numericalrange}{numerical range}, albeit with respect to a different geomtery. The fact that the \gls{srg} lifts out features of the \hyperlink{approximatepointspectrum}{approximate point spectrum} rather than the \hyperlink{extendedspectrum}{spectrum} is curious, but of no consequence if $T$ is finite dimensional or \hyperlink{normal}{normal}, since in these cases $\spectrumap{T}=\spectrum{T}$. In general, as demonstrated by the set of equivalences in the theorem statement, analogues of iii)--iv) also hold for the \hyperlink{extendedspectrum}{spectrum} if and only if $\srgc{T}\supseteq\srgc{T^*}$.
\begin{theorem}\label{thm:1}
Given a \hyperlink{boundedlinearoperator}{linear operator} $T$ on a complex \hyperlink{Hilbertspace}{Hilbert space},
\begin{equation}\label{eq:thm1}
\srg{T}=g\funof{\W{f\funof{T}}}.
\end{equation}
In addition:
\begin{enumerate}[i)]
	\item $\srg{T}=\hullbk{\srg{T}}$;
  \item if $TT^*=T^*T$, then $\srgc{T}=\hullbk{\spectrumap{T}}$;
  \item $\srgc{T}\supseteq\spectrumap{T}$;
  \item $\bigcap\cfunof{\srgc{STS^{-1}}:S,S^{-1}\text{ are \hyperlink{boundedlinearoperator}{linear operators}}}=\hullbk{\spectrumap{T}}$.
\end{enumerate}
Furthermore the following are equivalent:
\begin{enumerate}[i)]\addtocounter{enumi}{4}
	\item $\srgc{T}\supseteq\spectrum{T}$;
	\item $\bigcap\cfunof{\srgc{STS^{-1}}:S,S^{-1}\text{ are \hyperlink{boundedlinearoperator}{linear operators}}}=\hullbk{\spectrum{T}}$;
  \item $\srgc{T}\supseteq{}\srgc{T^*}$;
  \item $\spectrumap{T}\supseteq{}\spectrum{T}\cap\R$.
\end{enumerate}
\end{theorem}
\hyperlink{proof1}{The proof} of this result is given at the end of the subsection after a series of examples.

\begin{example}[The \gls{srg} in the matrix case]
In this example we will illustrate \Cref{thm:1} when the operator $T$ is a matrix with entries in $\C$, and draw some additional conclusions that apply in this case. 
\begin{enumerate}
  \item The \gls{srg} is a compact set ($\srgc{T}=\srg{T}$). This follows directly from the compactness of the \hyperlink{numericalrange}{numerical range} in the finite dimensional case.
  \item The \hyperlink{boundary}{boundary} of $\srg{T}$ is easily computed. This is because $f\funof{T}$ can be computed using standard algorithms, and inner and outer approximations of the \hyperlink{boundary}{boundary} of the \hyperlink{numericalrange}{numerical range} can be computed to arbitrary precision by solving a sequence of eigenvalue problems \cite{HJ91}\footnote{Software for computing the \hyperlink{boundary}{boundary} of the \hyperlink{numericalrange}{numerical range} in the matrix case can be obtained at \href{http://www.ma.man.ac.uk/~higham/mctoolbox}{http://www.ma.man.ac.uk/\textasciitilde{}higham/mctoolbox}.}. This is illustrated in \Cref{fig:ex1a} and \Cref{fig:ex2a}.
  \item The \gls{srg} of $T$ is equal to the \gls{srg} of its \hyperlink{adjoint}{adjoint}. To see this, note that the \hyperlink{approximatepointspectrum}{approximate point spectra} of $T$ and $T^*$ are equal to their \hyperlink{extendedspectrum}{spectra} ($\spectrumap{T}=\spectrum{T}$ and $\spectrumap{T^*}=\spectrum{T^*}$). Therefore statement \textit{v)} in \Cref{thm:1} is true for both $T$ and $T^*$, implying that $\srg{T}=\srg{T^*}$.
  \item $\srg{STS^{-1}}$ can be made arbitrarily close to $\hullbk{\spectrum{T}}$ using a single similarity transform. To see this, note that the Jordan decomposition of $T$ ensures that there exists an \hyperlink{boundedlyinvertible}{invertible} matrix $Q$ such that
  \[
  QTQ^{-1}=D+N,
  \]
  where $D$ is a diagonal matrix consisting of the eigenvalues of $T$, and $N$ is a strictly upper triangular matrix ($N_{jk}=0$ if $j\leq{}k$). Hence if $S_\gamma=\diag{\gamma,\gamma^2,\ldots}Q$, where $\diag{\gamma,\gamma^2,\ldots}$ denotes the diagonal matrix with entries $\gamma,\gamma^2,\ldots{}$, then
  \[
  S_\gamma{}TS_\gamma^{-1}=D+\diag{\gamma,\gamma^2,\ldots}N\diag{\gamma,\gamma^2,\ldots}^{-1}.
  \]
  Since 
  \[
  \lim_{\gamma\rightarrow{}\infty}\norm{\diag{\gamma,\gamma^2,\ldots}N\diag{\gamma,\gamma^2,\ldots}^{-1}}=0
  \]
  and $\srg{D}=\hullbk{\spectrum{T}}$, it follows that by making $\gamma$ sufficiently large the difference between $\srg{S_\gamma{}TS_\gamma^{-1}}$ and $\hullbk{\spectrum{T}}$ can be made arbitrarily small.
\end{enumerate}
\end{example}

\begin{figure}
\subfloat[\label{fig:ex1a}$T:\C^4\rightarrow{}\C^4$, $\spectrum{T}=\cfunof{1,1+\sqrt[3]{2},1-\sqrt[3]{2}\pm\sqrt[6]{27/16}}$.]
{
\begin{tikzpicture}[every to/.style={hyperbolic plane},scale=1.5]

\input{data/example1srg.tex}
\draw[fill=gray!20] (0.37004,1.0911) to (1,0) to (2.2599,0) to cycle;\begin{scope}[yscale=-1] \draw[fill=gray!20] (0.37004,1.0911) to (1,0) to (2.2599,0) to cycle; \end{scope}
\fill (0.37004,1.0911) circle[radius=1pt];\fill (0.37004,-1.0911) circle[radius=1pt];\fill (1,0) circle[radius=1pt];\fill (2.2599,0) circle[radius=1pt];

\draw (-3,0) node {\phantom{$\frac{d^2}{dt^2}\hvect{y}+2\frac{d}{dt}\hvect{y}+\hvect{y}=2\hvect{x}$}};
\draw (-3,0) node {$\hvect{y}=\begin{bmatrix}
1&0&-1&0\\0&2&0&1\\1&1&0&0\\0&0&1&1
\end{bmatrix}\hvect{x}$};

\draw[->] (-1,0) -- (3,0);
\draw[->] (0,-2.25) -- (0,2.25);
\end{tikzpicture}
}

\vspace{.5cm}
\subfloat[\label{fig:ex1b}$T:\Ltwo{}\rightarrow{}\Ltwo$, $\spectrum{T}=\cfunof{2/{\funof{i\omega+1}^2}:\omega\in\R\cup\cfunof{\infty{}}}$.]
{
\begin{tikzpicture}[every to/.style={hyperbolic plane},scale=1.5]

\draw[fill=gray!20] (1.9999,-0.012649) to (1.9999,-0.015627) to (1.9999,-0.019307) to (1.9998,-0.023853) to (1.9997,-0.029469) to (1.9995,-0.036406) to (1.9992,-0.044975) to (1.9988,-0.055558) to (1.9982,-0.068627) to (1.9973,-0.084762) to (1.9959,-0.10467) to (1.9937,-0.12923) to (1.9904,-0.15948) to (1.9854,-0.19671) to (1.9778,-0.24241) to (1.9662,-0.29832) to (1.9486,-0.36638) to (1.9222,-0.44859) to (1.8826,-0.54666) to (1.8238,-0.66151) to (1.738,-0.7921) to (1.6152,-0.93383) to (1.4453,-1.0763) to (1.2209,-1.2013) to (0.94337,-1.2829) to (0.63009,-1.2928) to (0.31678,-1.2121) to (0.049162,-1.0458) to (-0.13643,-0.8258) to (-0.23017,-0.59807) to (-0.24921,-0.40082) to (-0.2233,-0.25187) to (-0.1793,-0.15052) to (-0.13428,-0.086673) to (-0.096078,-0.048601) to (-0.066707,-0.026756) to (-0.045404,-0.014546) to (-0.030501,-0.0078414) to (-0.020314,-0.0042035) to (-0.013453,-0.002245) to (-0.0088764,-0.0011961) to (-0.0058422,-0.00063619) to (-0.0038391,-0.00033803) to (-0.0025201,-0.00017948) to (-0.0016532,-9.5256e-05) to (-0.001084,-5.0539e-05) to (-0.00071054,-2.6809e-05) to (-0.00046566,-1.4219e-05) to (-0.00030514,-7.5412e-06) to (-0.00019994,-3.9992e-06) to cycle;\begin{scope}[yscale=-1] \draw[fill=gray!20] (1.9999,-0.012649) to (1.9999,-0.015627) to (1.9999,-0.019307) to (1.9998,-0.023853) to (1.9997,-0.029469) to (1.9995,-0.036406) to (1.9992,-0.044975) to (1.9988,-0.055558) to (1.9982,-0.068627) to (1.9973,-0.084762) to (1.9959,-0.10467) to (1.9937,-0.12923) to (1.9904,-0.15948) to (1.9854,-0.19671) to (1.9778,-0.24241) to (1.9662,-0.29832) to (1.9486,-0.36638) to (1.9222,-0.44859) to (1.8826,-0.54666) to (1.8238,-0.66151) to (1.738,-0.7921) to (1.6152,-0.93383) to (1.4453,-1.0763) to (1.2209,-1.2013) to (0.94337,-1.2829) to (0.63009,-1.2928) to (0.31678,-1.2121) to (0.049162,-1.0458) to (-0.13643,-0.8258) to (-0.23017,-0.59807) to (-0.24921,-0.40082) to (-0.2233,-0.25187) to (-0.1793,-0.15052) to (-0.13428,-0.086673) to (-0.096078,-0.048601) to (-0.066707,-0.026756) to (-0.045404,-0.014546) to (-0.030501,-0.0078414) to (-0.020314,-0.0042035) to (-0.013453,-0.002245) to (-0.0088764,-0.0011961) to (-0.0058422,-0.00063619) to (-0.0038391,-0.00033803) to (-0.0025201,-0.00017948) to (-0.0016532,-9.5256e-05) to (-0.001084,-5.0539e-05) to (-0.00071054,-2.6809e-05) to (-0.00046566,-1.4219e-05) to (-0.00030514,-7.5412e-06) to (-0.00019994,-3.9992e-06) to cycle; \end{scope}

\draw (-3,0) node {\phantom{$\hvect{y}=\begin{bmatrix}
1&0&-1&0\\0&2&0&1\\1&1&0&0\\0&0&1&1
\end{bmatrix}\hvect{x}$}};
\draw (-3,0) node {$\frac{d^2}{dt^2}\hvect{y}+2\frac{d}{dt}\hvect{y}+\hvect{y}=2\hvect{x}$};

\draw[domain=0:90, smooth, variable=\x, black,line width=2pt] plot ({(cos(2*\x)*(cos(2*\x) - 1))},{sin(4*\x)/2 - sin(2*\x)});
\draw[domain=0:90, smooth, variable=\x, black,line width=2pt] plot ({(cos(2*\x)*(cos(2*\x) - 1))},{-sin(4*\x)/2 + sin(2*\x)});

\draw[->] (-1,0) -- (3,0);
\draw[->] (0,-2.25) -- (0,2.25);
\end{tikzpicture}
}
\caption{\label{fig:ex1}Illustration of $\srgc{T}$ (the orange region), $\hullbk{\spectrum{T}}$ (the grey region), and $\spectrum{T}$ (the black dots or thick black line) for two different operators.}
\end{figure}

\begin{figure}
\subfloat[\label{fig:ex2a}]{
\begin{tikzpicture}[scale=2.4]

\draw[fill=orange!50] (0.72227,-0.6264) to (0.72239,-0.61682) to (0.72225,-0.60567) to (0.72175,-0.59264) to (0.72079,-0.57738) to (0.7192,-0.5595) to (0.71683,-0.53862) to (0.71345,-0.51444) to (0.70889,-0.48677) to (0.70295,-0.45568) to (0.69554,-0.42156) to (0.68667,-0.38516) to (0.67651,-0.34755) to (0.66532,-0.30993) to (0.65347,-0.27343) to (0.6413,-0.23894) to (0.62913,-0.20701) to (0.61717,-0.17787) to (0.60557,-0.1515) to (0.59437,-0.12769) to (0.58359,-0.10619) to (0.57319,-0.086696) to (0.56313,-0.068926) to (0.55336,-0.052626) to (0.54381,-0.037577) to (0.53444,-0.023593) to (0.52519,-0.010521) to (0.51603,0.0017652) to (0.50691,0.013368) to (0.49781,0.02437) to (0.4887,0.03484) to (0.47955,0.044831) to (0.47035,0.054388) to (0.46107,0.063546) to (0.45172,0.072334) to (0.44226,0.080775) to (0.4327,0.088888) to (0.42303,0.096687) to (0.41324,0.10418) to (0.40333,0.11139) to (0.39328,0.1183) to (0.38311,0.12494) to (0.37281,0.1313) to (0.36237,0.13739) to (0.3518,0.1432) to (0.3411,0.14873) to (0.33026,0.154) to (0.3193,0.15899) to (0.3082,0.16371) to (0.29699,0.16815) to (0.28565,0.17231) to (0.27419,0.1762) to (0.26262,0.1798) to (0.25094,0.18311) to (0.23915,0.18614) to (0.22727,0.18887) to (0.21529,0.19131) to (0.20322,0.19346) to (0.19107,0.19531) to (0.17884,0.19685) to (0.16654,0.19809) to (0.15419,0.19902) to (0.14178,0.19965) to (0.12932,0.19996) to (0.11682,0.19996) to (0.1043,0.19965) to (0.091751,0.19901) to (0.07919,0.19807) to (0.066625,0.1968) to (0.054063,0.19521) to (0.041515,0.1933) to (0.02899,0.19108) to (0.016496,0.18853) to (0.0040428,0.18567) to (-0.0083596,0.18248) to (-0.020702,0.17898) to (-0.032976,0.17516) to (-0.045171,0.17103) to (-0.057277,0.16658) to (-0.069287,0.16183) to (-0.081191,0.15677) to (-0.092979,0.1514) to (-0.10464,0.14573) to (-0.11617,0.13976) to (-0.12756,0.1335) to (-0.1388,0.12695) to (-0.14988,0.12011) to (-0.1608,0.11299) to (-0.17154,0.10559) to (-0.1821,0.097921) to (-0.19247,0.089981) to (-0.20264,0.081776) to (-0.21261,0.073313) to (-0.22238,0.064595) to (-0.23193,0.055628) to (-0.24126,0.046416) to (-0.25036,0.036963) to (-0.25923,0.027275) to (-0.26787,0.017355) to (-0.27626,0.0072077) to (-0.28441,-0.0031632) to (-0.29231,-0.013754) to (-0.29996,-0.024562) to (-0.30735,-0.035584) to (-0.31447,-0.046817) to (-0.32134,-0.058261) to (-0.32794,-0.069914) to (-0.33426,-0.081774) to (-0.34032,-0.093843) to (-0.3461,-0.10612) to (-0.35159,-0.11861) to (-0.35681,-0.13131) to (-0.36174,-0.14423) to (-0.36638,-0.15738) to (-0.37072,-0.17075) to (-0.37477,-0.18435) to (-0.37851,-0.1982) to (-0.38194,-0.21231) to (-0.38506,-0.22668) to (-0.38785,-0.24133) to (-0.39031,-0.25627) to (-0.39244,-0.27153) to (-0.3942,-0.28711) to (-0.39561,-0.30304) to (-0.39663,-0.31935) to (-0.39726,-0.33604) to (-0.39748,-0.35315) to (-0.39726,-0.3707) to (-0.39658,-0.38871) to (-0.39541,-0.4072) to (-0.39374,-0.42619) to (-0.39152,-0.44568) to (-0.38873,-0.4657) to (-0.38535,-0.48623) to (-0.38134,-0.50725) to (-0.37667,-0.52874) to (-0.37134,-0.55065) to (-0.36532,-0.57291) to (-0.35862,-0.59545) to (-0.35125,-0.61815) to (-0.34322,-0.64088) to (-0.33459,-0.66352) to (-0.3254,-0.6859) to (-0.31573,-0.70786) to (-0.30566,-0.72926) to (-0.29529,-0.74995) to (-0.28471,-0.76978) to (-0.27401,-0.78867) to (-0.26331,-0.80651) to (-0.25269,-0.82326) to (-0.24222,-0.83888) to (-0.23197,-0.85337) to (-0.222,-0.86674) to (-0.21234,-0.87903) to (-0.20304,-0.89028) to (-0.1941,-0.90055) to (-0.18553,-0.90991) to (-0.17734,-0.91842) to (-0.16952,-0.92615) to (-0.16205,-0.93316) to (-0.15493,-0.93952) to (-0.14813,-0.9453) to (-0.14163,-0.95054) to (-0.13542,-0.95529) to (-0.12947,-0.95962) to (-0.12376,-0.96355) to (-0.11827,-0.96713) to (-0.11298,-0.9704) to (-0.10786,-0.97338) to (-0.10289,-0.97611) to (-0.09806,-0.97862) to (-0.093339,-0.98091) to (-0.088708,-0.98302) to (-0.084146,-0.98496) to (-0.07963,-0.98675) to (-0.075139,-0.9884) to (-0.070648,-0.98992) to (-0.066133,-0.99132) to (-0.061565,-0.99262) to (-0.056916,-0.99381) to (-0.052152,-0.99491) to (-0.047237,-0.99591) to (-0.042128,-0.99682) to (-0.036778,-0.99763) to (-0.031132,-0.99834) to (-0.025124,-0.99895) to (-0.018677,-0.99943) to (-0.011701,-0.99978) to (-0.0040867,-0.99997) to (0.0042974,-0.99997) to (0.013609,-0.99974) to (0.024037,-0.99921) to (0.035812,-0.99832) to (0.049205,-0.99696) to (0.064539,-0.99502) to (0.082185,-0.99233) to (0.10256,-0.98871) to (0.12608,-0.9839) to (0.15318,-0.97766) to (0.18413,-0.9697) to (0.21902,-0.95979) to (0.2576,-0.94778) to (0.29914,-0.9337) to (0.34244,-0.9178) to (0.38598,-0.90056) to (0.42811,-0.88265) to (0.46743,-0.86475) to (0.50293,-0.84751) to (0.53413,-0.83138) to (0.56098,-0.81662) to (0.58377,-0.80335) to (0.60292,-0.79154) to (0.61895,-0.78109) to (0.63234,-0.77187) to (0.64355,-0.76373) to (0.65296,-0.75654) to (0.66089,-0.75015) to (0.6676,-0.74445) to (0.67333,-0.73934) to (0.67824,-0.73473) to (0.68248,-0.73054) to (0.68617,-0.72672) to (0.68939,-0.72321) to (0.69222,-0.71995) to (0.69474,-0.71691) to (0.69698,-0.71406) to (0.69899,-0.71136) to (0.70082,-0.70878) to (0.70247,-0.70631) to (0.70399,-0.70392) to (0.70539,-0.70158) to (0.70669,-0.69929) to (0.70791,-0.69701) to (0.70905,-0.69473) to (0.71013,-0.69244) to (0.71116,-0.69011) to (0.71214,-0.68771) to (0.71308,-0.68524) to (0.71399,-0.68266) to (0.71488,-0.67994) to (0.71573,-0.67706) to (0.71657,-0.67397) to (0.71738,-0.67064) to (0.71816,-0.66701) to (0.71892,-0.66303) to (0.71965,-0.65861) to (0.72033,-0.65369) to (0.72096,-0.64814) to (0.72152,-0.64185) to (0.72197,-0.63467) to cycle;
\draw[fill=gray!20] (0.1407,-0.31797) to (-1.1102e-15,-1) to (0.67252,-0.74008) to cycle;
\fill (0.1407,-0.31797) circle[radius=.625pt];\fill (0.1407,-0.31797) circle[radius=.625pt];\fill (-1.1102e-15,-1) circle[radius=.625pt];\fill (0.67252,-0.74008) circle[radius=.625pt];

\draw(0,0) circle (1);
\draw[->] (-1.25,0) -- (1.25,0);
\draw[->] (0,-1.25) -- (0,1.25);

\end{tikzpicture}}\hfill{}
\subfloat[\label{fig:ex2b}]{
\begin{tikzpicture}[scale=2.4]

\draw[fill=gray!20] (0.59999,-0.79999) to (0.59999,-0.79998) to (0.59999,-0.79997) to (0.59998,-0.79996) to (0.59997,-0.79994) to (0.59995,-0.79991) to (0.59992,-0.79986) to (0.59988,-0.79978) to (0.59981,-0.79967) to (0.59971,-0.7995) to (0.59956,-0.79923) to (0.59933,-0.79883) to (0.59898,-0.79821) to (0.59844,-0.79727) to (0.59761,-0.79583) to (0.59636,-0.79364) to (0.59444,-0.79029) to (0.5915,-0.7852) to (0.58702,-0.77745) to (0.58017,-0.76568) to (0.5697,-0.74784) to (0.55366,-0.72092) to (0.52912,-0.68057) to (0.49157,-0.62073) to (0.43435,-0.53361) to (0.34816,-0.41072) to (0.22168,-0.24656) to (0.045856,-0.046908) to (-0.17608,0.16046) to (-0.41777,0.32632) to (-0.63564,0.40762) to (-0.79646,0.40115) to (-0.89608,0.33997) to (-0.95019,0.26187) to (-0.97708,0.18995) to (-0.98972,0.13273) to (-0.99546,0.090601) to (-0.99802,0.060943) to (-0.99914,0.040611) to (-0.99963,0.026902) to (-0.99984,0.017751) to (-0.99993,0.011684) to (-0.99997,0.0076781) to (-0.99999,0.0050402) to (-0.99999,0.0033063) to (-1,0.0021679) to (-1,0.0014211) to (-1,0.00093133) to (-1,0.00061029) to (-1,0.00039988) to cycle;
\draw[domain=0:90, smooth, variable=\x, black,line width=2pt] plot ({1-2/(cos(2*\x)^2-2*cos(2*\x)+2)},{-2*(cos(2*\x)*(cos(2*\x) - 1))/(cos(2*\x)^2-2*cos(2*\x)+2)});

\draw(0,0) circle (1);
\draw[->] (-1.25,0) -- (1.25,0);
\draw[->] (0,-1.25) -- (0,1.25);

\end{tikzpicture}}
\caption{\label{fig:ex2}The \protect\hyperlink{beltramikleinmapping}{Beltrami-Klein mapping} of the regions from \Cref{fig:ex1}. In both cases $f\funof{\srgc{T}}$ is {\protect\hyperlink{convex}{convex}}, as guaranteed by \Cref{thm:1}\textit{i)}.}
\end{figure}

\begin{example}[The \gls{srg} in the differential equation case]\label{ex:2}
In this example we study the \gls{srg} of an operator defined by a differential equation. This example can be viewed as a generalisation of \cite[Theorem 1]{CFS21}. In the following\footnote{Throughout this example we will tacitly assume that $\det\funof{z^p+\ldots{}+\alpha_{p-1}z+\alpha_p}\neq{}0$ for all $z\in{}i\R$, and that $p\geq{}q$. We note however that these requirements can be removed by extending \Cref{thm:1} to cover densely defined closed operators (which need not be bounded). Indeed this is not too hard to do since $f\funof{T}$ is still a bounded \hyperlink{boundedlinearoperator}{linear operator} whenever $T$ is a densely defined closed operator. However describing these extensions requires a considerably more densely defined notation, so we will not pursue this further here. A good introduction to densely defined closed operators can be found in \cite[Chapter X]{Con94}.} we will consider
\begin{equation}\label{eq:de}
\tfrac{d^p}{dt^p}\hvect{y}+\ldots{}+\alpha_{p-1}\tfrac{d}{dt}\hvect{y}+\alpha_p\hvect{y}=\beta_0\tfrac{d^q}{dt^q}\hvect{x}+\ldots{}+\beta_{q-1}\tfrac{d}{dt}\hvect{x}+\beta_q\hvect{x},
\end{equation}
where $\alpha_j,\beta_k\in\C$ and $\hvect{x},\hvect{y}\in\Ltwo$, though the approach we describe works just as well when these coefficients are square matrices and $\hvect{x},\hvect{y}$ are vectors of functions in $\Ltwo$. Note that in applications it might seem more natural to work on a real \hyperlink{Hilbertspace}{Hilbert space}, where $\alpha_j,\beta_k\in\R$, and $\hvect{x},\hvect{y}$ are real valued functions. In the next subsection it will be shown that from the perspective of the \gls{srg} this distinction is unimportant, and we may as well consider the case of complex \hyperlink{Hilbertspace}{Hilbert spaces}. 

It is possible to associate a range of different operators $\hvect{x}\mapsto{}\hvect{y}$ with \cref{eq:de} depending on the time interval or the boundary conditions that are being studied. A perspective that has been particularly profitable both in theory and in practice has been to associate \cref{eq:de} with a \hyperlink{boundedlinearoperator}{linear operator} $T:\Ltwo\rightarrow{}\Ltwo$ defined through a multiplication operator $T_h:\Ltwo\rightarrow{}\Ltwo$ in the frequency domain. In this setting, denoting the Fourier transform
 as $F:\Ltwo\rightarrow{}\Ltwo{}$, $T=F^*T_hF$, where
\[
T_h\hat{\hvect{x}}\w=h\w\hat{\hvect{x}}\w,
\]
and
\[
h\w=\frac{\beta_0\funof{i\omega}^q+\ldots{}\beta_{q-1}i\omega+\beta_q}{\funof{i\omega}^p+\ldots{}\alpha_{p-1}i\omega+\alpha_p}.
\]
The function $h\w$ is often referred to as a multiplier or transfer function. We will now show how to determine $\srgc{T}$. The first thing to note is that both the \gls{srg} and the \hyperlink{numericalrange}{numerical range} are unitarily invariant. That is given any \hyperlink{boundedlinearoperator}{linear operator} $U$ such that $UU^*=U^*U=I$, $\srg{U^*TU}=\srg{T}$ and $\W{U^*TU}=\W{T}$. Therefore
\begin{equation}\label{eq:intabove}
\srg{T}=\srg{T_h}=g\funof{\W{S^*\funof{T_h^*-iI}\funof{T_h-iI}S}},
\end{equation}
where $S$ is any \hyperlink{invertible}{invertible} \hyperlink{boundedlinearoperator}{linear operator} such that $SS^*=\funof{I+T_h^*T_h}^{-1}$. The first equality follows from the properties of the Fourier transform, and to see the second, observe that
\[
S^{-1}\funof{I+T^*_hT_h}^{-\frac{1}{2}}
\]
is unitary, and compare \cref{eq:intabove} with the definition of $f\funof{\cdot}$ from \cref{sec:23}. A suitable $S$ can then be obtained by applying factorisation techniques for rational functions. More specifically, the process of spectral factorisation can be used to find a bounded rational function $s:\R\rightarrow{}\R$ such that for all $\omega\in{}\R$,
\[
\frac{1}{\overline{h\w{}}h\w+1}=s\w\overline{s\w{}}.
\]
Such a factorisation is always possible, and can be obtained directly from $h\w$ using a normalised coprime factorisation \cite{Vid85}. For example, if $h\w=2/\funof{i\omega{}+1}^2$ (as in \Cref{fig:ex1b}), then a suitable $s\w$ is given by
\[
s\w=\frac{\funof{i\omega+1}^2}{\funof{iw}^2+\sqrt{2+2\sqrt{5}}\,i\omega+\sqrt{5}}.
\]
The multiplication operator
\[
T_s\hat{\hvect{v}}\w=s\w\hat{\hvect{v}}\w
\]
then satisfies $T_sT_s^*=\funof{I+T_h^*T_h}^{-1}$, and therefore
\[
\srgc{T}=g\funof{\cfunof{\W{\overline{s\w}\funof{\overline{h\w}-i}\funof{h\w-i}s\w}:\omega\in\R\cup\cfunof{\infty}}}.
\]
This is illustrated in \Cref{fig:ex1b} and \Cref{fig:ex2b}. The above process is easily generalised to the case that $\alpha_j,\beta_k$ are square matrices ($h\w$ becomes a matrix of rational functions, and $s\w$ can be obtained through the process of normalised right coprime factorisation). Note that in this setting $T$ is not guaranteed to be \hyperlink{normal}{normal}, and so unlike in the case of scalar coefficients $\srgc{T}$ is not necessarily equal to $\hullbk{\spectrum{T}}$.
\end{example}

\begin{example}[The \gls{srg} of the right shift operator]

We have now seen two examples of operators for which the statements \textit{v)--viii)} in \Cref{thm:1} were true, and the \gls{srg} gave information on both the \hyperlink{approximatepointspectrum}{approximate point spectrum} and the \hyperlink{extendedspectrum}{spectrum}. We will now study the \gls{srg} of an operator for which this is not the case. To this end, consider the right shift operator $T:\ltwo\rightarrow{}\ltwo$ given by
\[
T\funof{\hvect{x}_1,\hvect{x}_2,\ldots}=\funof{0,\hvect{x}_1,\hvect{x}_2,\ldots{}}.
\]
The \hyperlink{adjoint}{adjoint} of $T$ is the left shift operator $\funof{\hvect{x}_1,\hvect{x}_2,\ldots}\mapsto\funof{\hvect{x}_2,\hvect{x}_3,\ldots{}}$. It is possible to compute $\srg{T}$ and $\srg{T^*}$ directly. The steps for $T$ are particularly simple since $T^*T=I$, from which it follows that
\[
\begin{aligned}
f\funof{\srg{T}}&=\frac{1}{2i}\W{T+T^*}\\
&=\cfunof{z:z\in\C,\real{z}=0,\abs{z}<1}.
\end{aligned}
\]
Applying the function $g\funof{\cdot}$ from \cref{sec:23} then shows that 
\[
\srg{T}=\cfunof{z:z\in\C:\abs{z}=1,\real{z}\neq{}0}.
\]
A similar but slightly more involved calculation shows that
\[
\srg{T^*}=\cfunof{z:z\in\C:\abs{z}<1,\real{z}\neq{}0}.
\]
We therefore see that $\srgc{T^*}\supset\srgc{T}$. It then follows that the statements \textit{v)--viii)} in \Cref{thm:1} are false for $T$, but true for $T^*$. This means for example that $\srgc{T}\not\supseteq{}\hullbk{\spectrum{T}}$, but $\srgc{T^*}\supseteq{}\hullbk{\spectrum{T}}$. This is easily confirmed directly (in fact $\spectrumap{T}$ is the unit circle and $\spectrum{T}$ is the closed unit disk, meaning that $\srgc{T}=\spectrumap{T}$ and $\srgc{T^*}=\spectrum{T}$).
\end{example}

We now give the proof of \Cref{thm:1}.

\begin{proof}
\hypertarget{proof1}{}We start by establishing \cref{eq:thm1}. First note that considering the polar representation of a complex number $z=r\exp\funof{i\theta}$ shows that 
\[
f\funof{r\exp\funof{i\theta}}=\frac{r^2-1-2ir\cos{\theta}}{1+r^2}.
\]
In light of our discussion from \cref{sec:23} (c.f. \cref{eq:polar}), we then see that for any $\funof{\hvect{x},\hvect{y}}\in\graph{T}$,
\begin{align}
\label{eq:thm111}\!\!\!\!\!\!\!\!\quad{}f\funof{\frac{\norm{\hvect{y}}}{\norm{\hvect{x}}}\exp\funof{\!\pm{}i\arccos\funof{\frac{\real{\inner{\hvect{y}}{\hvect{x}}}}{\norm{\hvect{y}}\norm{\hvect{x}}}}\!\!}\!\!}
&=
\frac{\frac{\norm{\hvect{y}}^2}{\norm{\hvect{x}}^2}-1-2i\frac{\real{\inner{\hvect{y}}{\hvect{x}}}}{\norm{\hvect{x}}^2}}{1+\frac{\norm{\hvect{y}}^2}{\norm{\hvect{x}}^2}},\\
&=\frac{\norm{\hvect{y}}^2-\norm{\hvect{x}}^2-i\funof{\inner{\hvect{y}}{\hvect{x}}+\inner{\hvect{x}}{\hvect{y}}}}{\norm{\hvect{x}}^2+\norm{\hvect{y}}^2},\nonumber{}\\
&=\frac{\inner{R\funof{\hvect{x},\hvect{y}}}{\funof{\hvect{x},\hvect{y}}}}{\norm{\hvect{x}}^2+\norm{\hvect{y}}^2}\nonumber{},
\end{align}
where $R\funof{\hvect{x},\hvect{y}}=\funof{-i\hvect{y}-\hvect{x},\hvect{y}-i\hvect{x}}$. Consider now the linear map
\[
U\hvect{v}=\funof{\funof{I+T^*T}^{-\frac{1}{2}}\hvect{v},T\funof{I+T^*T}^{-\frac{1}{2}}\hvect{v}}.
\]
It is easily checked that for all $\hvect{v}\in\hilb$, $\inner{U\hvect{v}}{U\hvect{v}}=\inner{\hvect{v}}{\hvect{v}}$, $\graph{T}=\cfunof{U\hvect{v}:\hvect{v}\in\hilb}$, and
\[
\inner{RU\hvect{v}}{U\hvect{v}}=\inner{f\funof{T}\hvect{v}}{\hvect{v}}.
\]
Therefore $f\funof{\srg{T}}=\W{f\funof{T}}$, which shows \cref{eq:thm1}. Point \textit{i)} is then immediate from the Toeplitz-Hausdorff theorem. 

\begin{figure}
\subfloat[\label{fig:proofa}]
{
\begin{tikzpicture}[every to/.style={hyperbolic plane},scale=1.5]

\let\radius\undefined
\newlength{\radius}
\setlength{\radius}{1pt}

\draw[fill=orange!50,even odd rule] (-1,0) circle (1.8*221/220) (-1,0) circle (1);

\coordinate (a) at (-.2,.6);
\coordinate (a1) at (-.2,-.6);
\coordinate (b) at (.2,1);
\coordinate (b1) at (.2,-1);
\coordinate (c) at (.8,-21*1.8/220);
\coordinate (c1) at (.8,21*1.8/220);
\coordinate (alph) at (-1,0);

\draw[fill=gray!20] (a) to (b) to (c1) to cycle;
\draw[fill=gray!20] (a1) to (b1) to (c) to cycle;

\fill (a) circle[radius=\radius] node[left] {$p_1$};
\fill (b) circle[radius=\radius] node[above] {$p_2$};
\fill (c) circle[radius=\radius] node[right] {$p_3$};
\fill (alph) circle[radius=\radius];

\draw[->] (-3,0) -- (1,0);
\draw[->] (0,-2) -- (0,2);
\draw (-1,.05) -- (-1,-.15);
\draw[->] (0,-.1) -- node[below] {$\alpha$} (-1,-.1);
\draw[->] (-1,0) -- node[above,rotate=-45] {$\ri{\alpha,s}$} ({-1-cos(45)},{sin(45)});
\draw[->] (-1,0) -- node[above,rotate=45] {$\ro{\alpha,s}$} ({-1-221*1.8/220*cos(45)},{-221*1.8/220*sin(45)});
\end{tikzpicture}}\hfill{}
\subfloat[\label{fig:proofb}]{
\begin{tikzpicture}[scale=2.4]

\let\radius\undefined
\newlength{\radius}
\setlength{\radius}{.625pt}

\begin{pgfonlayer}{bg}
\fill[orange!50] (0,0) circle (1);

\begin{scope}
\clip (0,0) circle (1);
\draw[fill=white] ({bkx(0,0)},{bky(0,0)}) -- ({bkx(-2,0)},{bky(-2,0)}) -- (1,2) -- (-2,0);
\draw[fill=white] ({bkx(-1+1.8*221/220,0)},{bky(-1+1.8*221/220,0)}) -- ({bkx(-1-1.8*221/220,0)},{bky(-1-1.8*221/220,0)})-- (2,0) -- (-1,-2);
\end{scope}

\draw(0,0) circle (1);

\end{pgfonlayer}{bg}

\coordinate (a) at ({bkx(-.2,.6)},{bky(-.2,.6)});
\coordinate (b) at ({bkx(-.2,1)},{bky(-.2,1)});
\coordinate (c) at ({bkx(.8,-21*1.8/220)},{bky(.8,-21*1.8/220)});
\coordinate (alph) at ({bkx(-1,0)},{bky(-1,0)});

\draw[fill=gray!20] (a) to (b) to (c) to (a);
\draw[->] (-1.25,0) -- (1.25,0);
\draw[->] (0,-1.25) -- (0,1.25);

\fill (a) circle[radius=\radius] node[above] {$f\funof{p_1}$};
\fill (b) circle[radius=\radius] node[right] {$f\funof{p_2}$};
\fill (c) circle[radius=\radius] node[below] {$f\funof{p_3}$};
\fill (alph) circle[radius=\radius] node[above right] {$f\funof{\alpha}$};

\end{tikzpicture}}
\caption{\label{fig:proof} Characterisation of the points $\gamma\in{}\hullbk{s}$, for an example with $s=\cfunof{p_1,p_2,p_3}$.}
\end{figure}

We will now show \textit{ii)--iv)}. First denote the shortest and longest distances from a point $\gamma\in\C$ to a set $s\subseteq{}\C$ as
\[
\ri{\gamma,s}=\inf\cfunof{\abs{z-\gamma}:z\in{}s}\;\text{and}\;\ro{\gamma,s}=\sup\cfunof{\abs{z-\gamma}:z\in{}s}
\]
respectively. We will start by showing that given any $s\subseteq{}\C$,
\begin{equation}\label{eq:cohl}
\cl{\hullbk{s}}=\bigcap_{\alpha\in\R}\cfunof{z:z\in\C,\ri{\alpha,s}\leq{}\abs{z-\alpha}\leq\ro{\alpha,s}}.
\end{equation}
To see this, observe that for any value of $\alpha\in\R$, the inequalities in \cref{eq:cohl} characterise the points that lie outside a circle centred on $\alpha$ with radius $\ri{\alpha,s}$ and lie inside a circle centred on $\alpha$ with radius $\ro{\alpha,s}$. This is illustrated in \Cref{fig:proofa}, and the region in question corresponds to the orange annulus. \Cref{fig:proofb} shows the \hyperlink{beltramikleinmapping}{Beltrami-Klein mapping} of these regions. Since the \hyperlink{beltramikleinmapping}{Beltrami-Klein mapping} bijectively maps circles centred on the real axis to chords of the unit circle, and $f\funof{\hullbk{s}}$ is \hyperlink{convex}{convex}, this annulus contains $\cl{\hullbk{s}}$. Conversely every supporting hyperplane for the set $f\funof{\hullbk{s}}$ corresponds to a circle centred on some value of $\alpha\in\R$, and so the intersection of these regions gives $\cl{\hullbk{s}}$.

Next note that $\srg{T-\alpha{}I}=\srg{T}-\alpha$. It then follows from the definition of the \gls{srg} that
\begin{align}
\label{eq:out5}\ri{\alpha,\srg{T}}&=\innerm{T-\alpha{}I},\\
\label{eq:out6}\ro{\alpha,\srg{T}}&=\norm{T-\alpha{I}},
\end{align}
where in the first equation we have introduced the notation
\[
\innerm{A}=\inf\cfunof{\norm{A\hvect{x}}:\hvect{x}\in\hilb,\norm{\hvect{x}}=1}.
\]
It is then easily shown that
\begin{align}
\label{eq:out3}\innerm{T-\alpha{}I}&\leq{}\ri{\alpha,\spectrumap{T}},\\
\label{eq:out4}\norm{T-\alpha{}I}&\geq{}\ro{\alpha,\spectrumap{T}}.
\end{align}
The second of these inequalities is most usually stated in terms of the spectral radius (i.e. replace $\spectrumap{\cdot}$ with $\spectrum{\cdot}$). However, as shown in \cite[Problem 63]{Hal12}, $\partial{}\spectrum{T}\subseteq\spectrumap{T}$ and so this substitution incurs no loss. We therefore see that 
\[
\ri{\alpha,\srg{T}}\leq{}\ri{\alpha,\spectrumap{T}}\;\text{and}\;\ro{\alpha,\srg{T}}\geq{}\ro{\alpha,\spectrumap{T}}.
\]
When combined with \cref{eq:cohl} this shows that $\hullbk{\spectrumap{T}}\subseteq{}\srgc{T}$ (the \hyperlink{approximatepointspectrum}{approximate point spectrum} is always a closed set), which shows \textit{iii)}. This claim can be strengthened to an equality whenever \cref{eq:out3,eq:out4} are equalities for all $\alpha\in\R$. This is the case if $TT^*=T^*T$, which shows \textit{ii)}. To show \textit{iv)} we are required to show that if $\gamma\notin\hullbk{\spectrumap{T}}$, then there there exists an \hyperlink{boundedlyinvertible}{invertible} \hyperlink{boundedlinearoperator}{linear operator} $S$ such that $\gamma\notin\srgc{STS^{-1}}$. In light of \cref{eq:out3,eq:out4,eq:out5,eq:out6} this is equivalent to showing that given any $\varepsilon>0$, there exists an \hyperlink{boundedlyinvertible}{invertible} \hyperlink{boundedlinearoperator}{linear operator} $S_1$ such that
\begin{equation}
\label{eq:out7}
\innerm{S_1\funof{T-\alpha{}I}S_1^{-1}}>\ri{\alpha,\spectrumap{T}}-\varepsilon
\end{equation}
and there exists an \hyperlink{boundedlyinvertible}{invertible} \hyperlink{boundedlinearoperator}{linear operator} $S_2$ such that
\begin{equation}
\label{eq:out8}
\norm{S_2\funof{T-\alpha{}I}S_2^{-1}}<\ro{\alpha,\spectrumap{T}}+\varepsilon.
\end{equation}
In fact \cref{eq:out8} is a well known consequence of Rota's theorem \cite{Rot60}, so we will only show \cref{eq:out7}. By \cite[Theorem 1]{MJ83}, 
\[
\lim_{n\rightarrow{}\infty}\innerm{\funof{T-\alpha{}I}^n}^{\frac{1}{n}}=\ri{\alpha,\spectrumap{T}}.
\]
Therefore there exists a natural number $n$ such that 
\[
\innerm{\funof{T-\alpha{}I}^n}^{\frac{1}{n}}>\ri{\alpha,\spectrumap{T}}-\varepsilon.
\]
Now let
\[
A=\frac{1}{\ri{\alpha,\spectrumap{T}}-\varepsilon}\funof{T-\alpha{}I},
\]
and note that $\innerm{A^n}>1$. Defining $X=I+A^*A+\ldots{}+\funof{A^{n-1}}^*A^{n-1}$ we then see that for any non-zero $\hvect{x}\in\hilb$,
\[
\inner{\funof{A^*XA-X}\hvect{x}}{\hvect{x}}=\inner{\funof{\funof{A^{n}}^*A^{n}-I}\hvect{x}}{\hvect{x}}\geq{}\funof{\innerm{A^n}^2-1}\norm{\hvect{x}}^2>0.
\]
Furthermore since $\inner{X\hvect{x}}{\hvect{x}}\geq{}\norm{\hvect{x}}^2$, there exists an \hyperlink{boundedlyinvertible}{invertible} \hyperlink{boundedlinearoperator}{linear operator} $S_1$ such that $X=S_1^*S_1$. Putting $S_1\hvect{x}=\hvect{y}$ we now see that
\[
\frac{\inner{\funof{A^*XA-X}\hvect{x}}{\hvect{x}}}{\inner{S_1\hvect{x}}{S_1\hvect{x}}}=\frac{\inner{S_1AS_1^{-1}\hvect{y}}{S_1AS_1^{-1}\hvect{y}}}{\norm{\hvect{y}}^2}-1>0.
\]
Therefore $\innerm{S_1AS_1^{-1}}>1$, and so \cref{eq:out7} holds.

To complete the proof we focus on the equivalence of \textit{v)--viii)}.

\textit{vii)}$\,\Rightarrow{}$\textit{v)}: First note that $\spectrum{T}\subseteq\spectrumap{T}\cup\overline{\spectrumap{T^*}}$. Since by definition $\srg{T}=\overline{\srg{T}}$, this shows that $\spectrum{T}\subseteq{}\srgc{T}\cup\srgc{T^*}$, and so by the hypothesis of \textit{vii)} $\spectrum{T}\subseteq{}\srgc{T}$.

\textit{v)}$\,\Rightarrow{}$\textit{viii)}: We proceed by contraposition. Assume that $\alpha\in\spectrum{T}\cap\R$ is not in $\spectrumap{T}$, and so $\innerm{T-\alpha{}I}>0$. From the definition of the \gls{srg}, this imples that $0\notin\srgc{T-\alpha{}I}$. Hence $\alpha\notin\srgc{T}$, and so $\spectrum{T}\not\subseteq{}\srgc{T}$ as required.

\textit{viii})$\,\Rightarrow{}$\textit{vii)}: First note that $\norm{T-\alpha{}I}=\norm{T^*-\alpha{}I}$, and if $\alpha\notin{}\spectrum{T}$, then
\[
\innerm{T-\alpha{}I}=1/\Vert\funof{T-\alpha{}I}^{-1}\Vert=1/\Vert\funof{T^*-\alpha{}I}^{-1}\Vert.
\]
Consider again \cref{eq:out5,eq:out6}. Observe in particular that given any $\alpha\in\R$, under the hypothesis of \textit{viii}) $\ri{\alpha,\srg{T}}\neq{}0$ only if $\alpha\notin\spectrum{T}$. We therefore see from \cref{eq:cohl} that $\gamma\notin\srgc{T}$ only if $\gamma\notin\srgc{T^*}$ as required.

\textit{viii})$\,\Rightarrow{}$\textit{vi)}: Recall that $\partial\spectrum{T}\subseteq{}\spectrumap{T}$. Therefore under the hypothesis of \textit{viii}), if $\alpha\in\R$, then $\ri{\alpha,\spectrumap{T}}=\ri{\alpha,\spectrum{T}}$, and so $\hullbk{\spectrum{T}}=\hullbk{\spectrumap{T}}$. $\textit{vi)}$ now follows from \textit{iv)}.

\textit{vi)}$\,\Rightarrow{}$\textit{v)}: Immediate.
\end{proof}

\subsection{Real Hilbert spaces}\label{sec:32}

In the previous subsection we showed that for a \hyperlink{boundedlinearoperator}{linear operator} acting on a complex \hyperlink{Hilbertspace}{Hilbert space}, the concept of the \gls{srg} is closely related to the \hyperlink{numericalrange}{numerical range}. However, largely motivated by applications from convex optimization, the \gls{srg} has primarily been studied in the context of \hyperlink{Hilbertspace}{Hilbert spaces} over $\R$. At first sight, it might seem like there are fundamental differences between the real and complex case. For example when viewed as an operator on a real \hyperlink{Hilbertspace}{Hilbert space} with \hyperlink{innerproduct}{inner product} $\inner{\hvect{y}}{\hvect{x}}=\hvect{x}^\mathsf{T}\hvect{y}$, 
\begin{equation}\label{eq:2by2}
f\funof{\srg{\begin{bmatrix}
0&1\\0&0
\end{bmatrix}}}=\cfunof{z:\abs{z+\frac{1+i}{2}}+\abs{z+\frac{1-i}{2}}=\sqrt{2},z\in\C}.
\end{equation}
This is not a \hyperlink{convex}{convex} set (it is the \hyperlink{boundary}{boundary} of an ellipse), and therefore \Cref{thm:1} \textit{i)} fails. However when we view the same matrix as an operator on $\C^2$ with \hyperlink{innerproduct}{inner product} $\inner{\hvect{y}}{\hvect{x}}=\overline{\hvect{x}}^\mathsf{T}\hvect{y}$ we obtain
\[
f\funof{\srg{\begin{bmatrix}
0&1\\0&0
\end{bmatrix}}}=\cfunof{z:\abs{z+\frac{1+i}{2}}+\abs{z+\frac{1-i}{2}}\leq{}\sqrt{2},z\in\C}.
\]
That is the \gls{srg} of the operator on the real \hyperlink{Hilbertspace}{Hilbert space} is equal to the \hyperlink{boundary}{boundary} of the \gls{srg} of its complexified counterpart, suggesting the two objects are in fact closely related. This is illustrated in \Cref{fig:5}.

A similar behaviour is seen when studying tuples of Hermitian forms (of which the \hyperlink{numericalrange}{numerical range} is a special case). More specifically, given two $n\times{}n$ symmetric matrices $A$ and $B$ with real entries, it was shown in \cite{Bri61} that
\[
\cfunof{\hvect{x}^\mathsf{T}A\hvect{x}+i\hvect{x}^\mathsf{T}B\hvect{x}:\hvect{x}^\mathsf{T}\hvect{x}=1,\hvect{x}\in\R^n}=\begin{cases}\partial\W{A+iB}&\text{if $n=2$;}\\
\W{A+iB}&\text{otherwise.}
\end{cases}
\]
This result relates the joint \hyperlink{numericalrange}{numerical range} $\cfunof{\funof{\inner{A\hvect{x}}{\hvect{x}},\inner{B\hvect{x}}{\hvect{x}}}:\hvect{x}\in\hilb,\norm{\hvect{x}}=1}$ of two operators on a finite dimensional real \hyperlink{Hilbertspace}{Hilbert space}, to the \hyperlink{numericalrange}{numerical range} of a related operator acting on a finite dimensional complex \hyperlink{Hilbertspace}{Hilbert space}. Moreover, it shows that the two are different only if the \hyperlink{Hilbertspace}{Hilbert space} has dimension 2, where instead the real case equals the \hyperlink{boundary}{boundary} of the complex case. The main result of this subsection is an adaptation of the above that shows that the \gls{srg} behaves in an analogous manner. Before stating the result, let us first formalise the notion of \hypertarget{complexification}{complexification} beyond the matrix case. The following, which can be found in \cite[Chapter I]{Con94}, gives the suitable notion of the \hyperlink{complexification}{complexification} of a \hyperlink{Hilbertspace}{Hilbert space}.
\begin{lemma}\label{lem:1}
Let $\hilb$ be a real \hyperlink{Hilbertspace}{Hilbert space}. Then there exists a complex \hyperlink{Hilbertspace}{Hilbert space} $\hilb_{\C}$ and a linear map $U:\hilb{}\rightarrow{}\hilb_{\C}$ such that:
\begin{enumerate}[i)]
	\item $\inner{U\hvect{x}_1}{U\hvect{x}_2}=\inner{\hvect{x}_1}{\hvect{x}_2}$ for all $\hvect{x}_1,\hvect{x}_2\in\hilb$;
	\item for any $\hvect{y}\in\hilb_{\C}$, there are unique $\hvect{x}_1,\hvect{x}_2\in\hilb$ such that $\hvect{y}=U\hvect{x}_1+iU\hvect{x}_2$.
\end{enumerate}
\end{lemma}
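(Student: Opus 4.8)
The plan is to give the standard explicit construction of the complexification. Take $\hilb_{\C}$ to be the set of ordered pairs $\funof{\hvect{x}_1,\hvect{x}_2}$ with $\hvect{x}_1,\hvect{x}_2\in\hilb$ — to be thought of as the formal sum $\hvect{x}_1+i\hvect{x}_2$ — with coordinatewise addition and with complex scalar multiplication
\[
\funof{a+bi}\funof{\hvect{x}_1,\hvect{x}_2}=\funof{a\hvect{x}_1-b\hvect{x}_2,\,b\hvect{x}_1+a\hvect{x}_2},\qquad a,b\in\R.
\]
A routine check shows this makes $\hilb_{\C}$ a complex vector space; in particular $i\funof{\hvect{x}_1,\hvect{x}_2}=\funof{-\hvect{x}_2,\hvect{x}_1}$. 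I would then define $U\hvect{x}=\funof{\hvect{x},\hvect{0}}$, which is plainly $\R$-linear. Since $iU\hvect{x}_2=\funof{\hvect{0},\hvect{x}_2}$, we get $U\hvect{x}_1+iU\hvect{x}_2=\funof{\hvect{x}_1,\hvect{x}_2}$, so the existence and uniqueness in \textit{ii)} are immediate: the only admissible $\hvect{x}_1,\hvect{x}_2$ are the two coordinates of the given element of $\hilb_{\C}$.

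Next I would equip $\hilb_{\C}$ with the inner product
\[
\inner{\funof{\hvect{x}_1,\hvect{x}_2}}{\funof{\hvect{y}_1,\hvect{y}_2}}=\inner{\hvect{x}_1}{\hvect{y}_1}+\inner{\hvect{x}_2}{\hvect{y}_2}+i\funof{\inner{\hvect{x}_2}{\hvect{y}_1}-\inner{\hvect{x}_1}{\hvect{y}_2}},
\]
where the inner products on the right are those of $\hilb$. Checking that this is a genuine complex inner product is the bulk of the work, although each step is short. Additivity in each argument is clear. Homogeneity in the first argument reduces, by additivity, to verifying $\inner{i\funof{\hvect{x}_1,\hvect{x}_2}}{\funof{\hvect{y}_1,\hvect{y}_2}}=i\inner{\funof{\hvect{x}_1,\hvect{x}_2}}{\funof{\hvect{y}_1,\hvect{y}_2}}$, which follows by substituting $i\funof{\hvect{x}_1,\hvect{x}_2}=\funof{-\hvect{x}_2,\hvect{x}_1}$ and expanding both sides. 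Conjugate symmetry holds because the inner product on $\hilb$ is symmetric, so interchanging the two arguments leaves the real part unchanged and negates the imaginary part. Finally $\inner{\funof{\hvect{x}_1,\hvect{x}_2}}{\funof{\hvect{x}_1,\hvect{x}_2}}=\norm{\hvect{x}_1}^2+\norm{\hvect{x}_2}^2\geq 0$, with equality only if $\hvect{x}_1=\hvect{x}_2=\hvect{0}$.

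It then remains to verify completeness and property \textit{i)}. The induced norm satisfies $\norm{\funof{\hvect{x}_1,\hvect{x}_2}}^2=\norm{\hvect{x}_1}^2+\norm{\hvect{x}_2}^2$, so as a real normed space $\hilb_{\C}$ is exactly $\hilb\oplus\hilb$, which is complete; hence $\hilb_{\C}$ is a complex Hilbert space. Property \textit{i)} is a one-line computation: $\inner{U\hvect{x}_1}{U\hvect{x}_2}=\inner{\funof{\hvect{x}_1,\hvect{0}}}{\funof{\hvect{x}_2,\hvect{0}}}=\inner{\hvect{x}_1}{\hvect{x}_2}$, since every second-coordinate term vanishes.

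There is no real obstacle here; the construction is elementary. The one point that genuinely needs care is the bookkeeping of signs: the formula for complex scalar multiplication and the cross term $i\funof{\inner{\hvect{x}_2}{\hvect{y}_1}-\inner{\hvect{x}_1}{\hvect{y}_2}}$ must be chosen compatibly so that the resulting form is linear — not conjugate-linear — in its first argument, matching the convention fixed in \cref{sec:2}. An alternative, coordinate-free route is to take the algebraic tensor product $\C\otimes_{\R}\hilb$ with the sesquilinear form determined by $\inner{\lambda\otimes\hvect{x}}{\mu\otimes\hvect{y}}=\lambda\overline{\mu}\inner{\hvect{x}}{\hvect{y}}$ and pass to the completion, with $U\hvect{x}=1\otimes\hvect{x}$, but the explicit pair model is cleaner to verify and makes $U$ transparent.
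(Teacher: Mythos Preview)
Your construction is correct and is the standard textbook complexification. The paper itself does not supply a proof of this lemma; it simply records the statement and defers to \cite[Chapter~I]{Con94}. What you have written is essentially the argument one finds there, so there is nothing to compare against beyond noting that your explicit pair model and the cited reference agree.
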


\begin{figure}
\subfloat[]{
\begin{tikzpicture}[scale=2.4]

\draw[line width=1.25pt,fill=orange!50](0,0.5) circle (.5);
\draw[line width=1.25pt,fill=orange!50](0,-0.5) circle (.5);
\draw[->] (-1.25,0) -- (1.25,0);
\draw[->] (0,-1.25) -- (0,1.25);

\end{tikzpicture}}\hfill{}
\subfloat[]{
\begin{tikzpicture}[scale=2.4]

\draw(0,0) circle (1);
\draw[line width=1.25pt,fill=orange!50] (-0.5,0) ellipse (.5 and {0.5*sqrt(2)});
\draw[->] (-1.25,0) -- (1.25,0);
\draw[->] (0,-1.25) -- (0,1.25);

\end{tikzpicture}}
\caption{\label{fig:5} {\footnotesize \textsc{({\tiny A})}} shows $\srg{T}$ (the black circles) and $\srg{T_{\C}}$ (the orange region) for the matrix in \cref{eq:2by2}. {\footnotesize \textsc{({\tiny B})}} shows the \protect\hyperlink{beltramikleinmapping}{Beltrami-Klein mapping} of these regions.}
\end{figure}

Given an operator $T$ on a real \hyperlink{Hilbertspace}{Hilbert space} $\hilb$, we define the \hyperlink{complexification}{complexification} of $T$ to be the operator $T_{\C}$ on $\hilb_{\C}$ which satisfies
\[
T_{\C}\funof{U\hvect{x}_1+iU\hvect{x}_2}=UT\hvect{x}_1+iUT\hvect{x}_2,\;\text{for all $\hvect{x}_1,\hvect{x}_2\in\hilb$}.
\]
It is easy enough to check that these abstractions behave exactly as expected in the matrix case (and also in going from operators on real valued \hyperlink{squareintegrablefunction}{square integrable functions} to $\Ltwo{}$). With this definition in place we are ready to state the main result of this subsection. The following theorem shows that in all dimensions except 2 (including the infinite dimensional case), $\srg{T}=\srg{T_\C}$. Furthermore in dimension 2, $\srg{T}$ is equal to the \hyperlink{boundary}{boundary} of $\srg{T_{\C}}$. This means that \Cref{fig:ex1,fig:ex2} also show the \glspl{srg} of the corresponding operators when viewed on a real \hyperlink{Hilbertspace}{Hilbert space}, and in all cases, \srg{T} can be obtained from the \hyperlink{numericalrange}{numerical range} of an operator on a complex \hyperlink{Hilbertspace}{Hilbert space}, as described in the previous subsection. 

\begin{theorem}\label{thm:2}
Let $T$ be a \hyperlink{boundedlinearoperator}{linear operator} on a real \hyperlink{Hilbertspace}{Hilbert space}. Then
\[
\srg{T}=\begin{cases}
\partial\srg{T_{\C}}&\text{if $T$ has dimension 2;}\\
\srg{T_{\C}}&\text{otherwise.}
\end{cases}
\]
\end{theorem}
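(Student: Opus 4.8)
The plan is to push the statement through the \hyperlink{beltramikleinmapping}{Beltrami--Klein mapping} $f$, reducing it to a fact about a pair of real quadratic forms on $\graph{T}$ which turns out to be exactly the result of \cite{Bri61} quoted above. Since $\graph{T}$ is a closed subspace of the real \hyperlink{Hilbertspace}{Hilbert space} $\hilb\oplus\hilb$ it is itself a real \hyperlink{Hilbertspace}{Hilbert space}, and $Q_1\funof{\hvect x,\hvect y}=\norm{\hvect y}^2-\norm{\hvect x}^2$, $Q_2\funof{\hvect x,\hvect y}=-2\inner{\hvect y}{\hvect x}$ are bounded quadratic forms on it. On a real \hyperlink{Hilbertspace}{Hilbert space} $\inner{\hvect y}{\hvect x}$ is real, so $\inner{\hvect y}{\hvect x}+\inner{\hvect x}{\hvect y}=2\inner{\hvect y}{\hvect x}$, and the computation \eqref{eq:thm111} goes through unchanged to give
\[
f\funof{\srg{T}}=\cfunof{Q_1\funof{\hvect w}+iQ_2\funof{\hvect w}:\hvect w\in\graph{T},\,\norm{\hvect w}=1}\cup\cfunof{1\text{ if }T\text{ is \hyperlink{unbounded}{unbounded}}}
\]
(every unit vector of $\graph{T}$ has nonzero first coordinate, because $T$ is linear). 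Write $\mathcal{R}$ for the first of the two sets on the right.

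The heart of the proof is the corresponding description of $f\funof{\srg{T_{\C}}}$. By \Cref{lem:1} the isometry $\hat{U}=U\oplus{}U$ realises $\hilb_{\C}\oplus\hilb_{\C}$ as the \hyperlink{complexification}{complexification} of $\hilb\oplus\hilb$, and from the definition of $T_{\C}$ one checks that $\graph{T_{\C}}=\cfunof{\hat{U}\hvect w_1+i\hat{U}\hvect w_2:\hvect w_1,\hvect w_2\in\graph{T}}$ is the \hyperlink{complexification}{complexification} of $\graph{T}$ inside it. Now $R$ is complex linear, and --- crucially because the \hyperlink{innerproduct}{inner product} on $\hilb$ is real --- the bilinear form $\funof{\hvect w,\hvect w'}\mapsto\inner{R\hat{U}\hvect w}{\hat{U}\hvect w'}$ on $\graph{T}$ is \emph{symmetric}; expanding $\inner{R\hvect w}{\hvect w}$ for a unit vector $\hvect w=\hat{U}\hvect w_1+i\hat{U}\hvect w_2\in\graph{T_{\C}}$, the two cross terms cancel and one obtains
\[
\inner{R\hvect w}{\hvect w}=\bigl(Q_1\funof{\hvect w_1}+iQ_2\funof{\hvect w_1}\bigr)+\bigl(Q_1\funof{\hvect w_2}+iQ_2\funof{\hvect w_2}\bigr),
\]
with $\norm{\hvect w_1}^2+\norm{\hvect w_2}^2=1$. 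Setting $\hvect w_1=\sqrt{t}\,\hvect u$, $\hvect w_2=\sqrt{1-t}\,\hvect v$ with $\hvect u,\hvect v$ unit vectors (or zero) and $t\in[0,1]$, the homogeneity of $Q_1,Q_2$ and \Cref{thm:1} give --- noting also that $T$ is \hyperlink{unbounded}{unbounded} exactly when $T_{\C}$ is, so that the point at infinity lies in $\srg{T}$ and in $\srg{T_{\C}}$ simultaneously or in neither ---
\[
f\funof{\srg{T_{\C}}}=\cfunof{t\zeta+\funof{1-t}\xi:\zeta,\xi\in\mathcal{R},\,t\in[0,1]}\cup\cfunof{1\text{ if }T\text{ is \hyperlink{unbounded}{unbounded}}}.
\]
In words, the non-trivial part of $f\funof{\srg{T_{\C}}}$ is the union of all chords joining points of $\mathcal{R}$.

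It remains to compare $\mathcal{R}$ with this union of chords and then transfer the conclusion back through $g$ via \eqref{eq:sim}, using that on a fixed branch $g$ is a homeomorphism of the closed unit disc onto a closed half-plane together with $\cfunof{\infty}$, hence carries topological boundaries to topological boundaries. If $\dim\graph{T}=1$, then $\mathcal{R}$ is a single point and the claim is immediate. If $\dim\graph{T}\geq3$, take $\zeta,\xi\in\mathcal{R}$ represented by unit vectors $\hvect u,\hvect v\in\graph{T}$; if $\hvect u,\hvect v$ are dependent then $\zeta=\xi$ and the chord is trivial, and otherwise choose a $3$-dimensional subspace $N$ with $\hvect u,\hvect v\in N\subseteq\graph{T}$. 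The restrictions of $Q_1,Q_2$ to $N\cong\R^3$ form a pair of real symmetric quadratic forms, so by \cite{Bri61} the set $\cfunof{Q_1\funof{\hvect w}+iQ_2\funof{\hvect w}:\hvect w\in N,\,\norm{\hvect w}=1}$ is \hyperlink{convex}{convex}; it lies inside $\mathcal{R}$ and contains $\zeta,\xi$, so the chord $[\zeta,\xi]$ lies in $\mathcal{R}$. Hence $\mathcal{R}$ is \hyperlink{convex}{convex}, it equals its union of chords, $f\funof{\srg{T_{\C}}}=f\funof{\srg{T}}$, and $\srg{T}=\srg{T_{\C}}$. If $\dim\graph{T}=2$, then $T$ is bounded and $Q_1,Q_2$ on $\graph{T}\cong\R^2$ are a pair of real symmetric quadratic forms on $\R^2$; by \cite{Bri61} $\mathcal{R}$ is the \hyperlink{boundary}{boundary} of a filled ellipse, and that filled ellipse, being the union of its chords, equals $f\funof{\srg{T_{\C}}}$. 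Thus $f\funof{\srg{T}}=\mathcal{R}=\partial f\funof{\srg{T_{\C}}}$, and transferring through $g$ gives $\srg{T}=\partial\srg{T_{\C}}$.

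The main obstacle is the cross-term cancellation in the second step: this is where the hypothesis of a \emph{real} \hyperlink{Hilbertspace}{Hilbert space} is used decisively, and obtaining it in the clean form $f\funof{\srg{T_{\C}}}=\hull{f\funof{\srg{T}}}$ is what makes the reduction to \cite{Bri61} work. After that the convexity input is entirely a citation (the compression argument in the third step being elementary), and all that remains is routine topological bookkeeping around $g$ and the point at infinity.
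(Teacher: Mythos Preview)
Your proof is correct and follows essentially the same route as the paper's: push through $f$, recognise $f\funof{\srg{T}}$ as the joint range of two symmetric forms on $\graph{T}$, use the symmetry (equivalently $A=A^*$, $B=B^*$ over $\R$) to kill the cross terms and obtain $f\funof{\srg{T_{\C}}}=\hull{f\funof{\srg{T}}}$, then invoke known convexity results for real joint numerical ranges. The one substantive difference is the convexity input in dimension $\geq 3$: the paper cites \cite{Leg05} directly for arbitrary real Hilbert spaces, whereas you reduce to the finite-dimensional statement of \cite{Bri61} by compressing to a $3$-dimensional subspace containing the two given unit vectors. Your compression argument is a nice way to avoid the extra reference, and your treatment of the point at infinity and of how $g$ transports boundaries is more explicit than the paper's, but structurally the two proofs are the same.
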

\begin{proof}
Let us first slightly rework the characterisation of $\srg{T}$ from \Cref{thm:1} to make it suitable for operators on real \hyperlink{Hilbertspace}{Hilbert spaces}. The issue is that as written, $f\funof{T}:\hilb\rightarrow{}\hilb_{\C}$, and so we cannot define its \hyperlink{numericalrange}{numerical range}. However the problem is only superficial, and starting from \cref{eq:thm111} it is easily shown that
\[
f\funof{\srg{T}}=\cfunof{\inner{A\hvect{x}}{\hvect{x}}+i\inner{B\hvect{x}}{\hvect{x}}:\hvect{x}\in\hilb,\norm{\hvect{x}}=1},
\]
where
\[
\begin{aligned}
A&=\funof{I+T^*T}^{-\frac{1}{2}}\funof{T^*T-I}\funof{I+T^*T}^{-\frac{1}{2}}\;\text{and}\;\\
B&=-\funof{I+T^*T}^{-\frac{1}{2}}\funof{T+T^*}\funof{I+T^*T}^{-\frac{1}{2}}.
\end{aligned}
\]
Similarly
\[
f\funof{\srg{T_{\C}}}=\cfunof{\inner{A_{\C}\hvect{y}}{\hvect{y}}+i\inner{B_{\C}\hvect{y}}{\hvect{y}}:\hvect{y}\in\hilb_{\C},\norm{\hvect{y}}=1}.
\]
Direct calculation shows that for any $\hvect{y}=U\hvect{x}_1+iU\hvect{x}_2\in\hilb_{\C}$,
\[
\begin{aligned}
\inner{A_{\C}\hvect{y}}{\hvect{y}}&=\inner{UA\hvect{x}_1+iUA\hvect{x}_2}{U\hvect{x}_1+iU\hvect{x}_2},\\
&=\inner{UA\hvect{x}_1}{U\hvect{x}_1}+\inner{UA\hvect{x}_2}{U\hvect{x}_2}+i\funof{\inner{UA\hvect{x}_2}{U\hvect{x}_1}-\inner{UA\hvect{x}_1}{U\hvect{x}_2}},\\
&=\inner{A\hvect{x}_1}{\hvect{x}_1}+\inner{A\hvect{x}_2}{\hvect{x}_2}+i\funof{\inner{A\hvect{x}_2}{\hvect{x}_1}-\inner{A\hvect{x}_1}{\hvect{x}_2}}.
\end{aligned}
\]
Since $A=A^*$ and $\hilb$ is over $\R$, $\inner{A\hvect{x}_2}{\hvect{x}_1}=\inner{A\hvect{x}_1}{\hvect{x}_2}$, and so the imaginary part in the above equals zero. Using a similar argument for $\inner{B_{\C}\hvect{y}}{\hvect{y}}$ therefore shows that
\[
\begin{aligned}
\inner{A_{\C}\hvect{y}}{\hvect{y}}+i\inner{B_{\C}\hvect{y}}{\hvect{y}}&=\inner{A\hvect{x}_1}{\hvect{x}_1}+i\inner{B\hvect{x}_1}{\hvect{x}_1}+\inner{A\hvect{x}_2}{\hvect{x}_2}+i\inner{B\hvect{x}_2}{\hvect{x}_2}\\
&=p_1\norm{\hvect{x}_1}^2+p_2\norm{\hvect{x}_2}^2,
\end{aligned}
\]
where $p_1,p_2\in{}f\funof{\srg{T}}$.  Noting that $\norm{\hvect{y}}^2=\norm{\hvect{x}_1}^2+\norm{\hvect{x}_2}^2$, this implies that $f\funof{\srg{T}}\subseteq{}f\funof{\srg{T_{\C}}}\subseteq\hull{f\funof{\srg{T}}}$. By \cite[Theorem 2]{Leg05}, the joint \hyperlink{numericalrange}{numerical range} of any two Hermitian forms on a real \hyperlink{Hilbertspace}{Hilbert space} is \hyperlink{convex}{convex} unless that \hyperlink{Hilbertspace}{Hilbert space} has dimension 2. Therefore $f\funof{\srg{T}}$ is \hyperlink{convex}{convex} unless $T$ has dimension 2, which establishes the second case in the theorem statement. For the two dimensional case, as noted in \cite{Bri61}, the set
\[
\cfunof{\inner{A\hvect{x}}{\hvect{x}}+i\inner{B\hvect{x}}{\hvect{x}}:\norm{\hvect{x}}=1,\hvect{x}\in\hilb{}}
\]
can only be an ellipse, circle, line or point. Since these shapes all have \hyperlink{convex}{convex} \hyperlink{boundary}{boundaries}, this then implies that $f\funof{\srg{T}}=f\funof{\partial{}\srg{T_{\C}}}$ as required.
\end{proof}

\section{Conclusions}

We have demonstrated that the \gls{srg} of a \hyperlink{boundedlinearoperator}{linear operator} acting on complex \hyperlink{Hilbertspace}{Hilbert space} can be determined from the \hyperlink{numericalrange}{numerical range} of a closely related \hyperlink{boundedlinearoperator}{linear operator}. This was used to show that \hyperlink{beltramikleinmapping}{Beltrami-Klein mapping} of the \gls{srg} is \hyperlink{convex}{convex}, and derive an analogue of Hildebrant's theorem for the \gls{srg}. It was further shown how to re-purpose algorithms developed for the \hyperlink{numericalrange}{numerical range} to plot the \hyperlink{boundary}{boundary} of the \gls{srg} in the matrix and linear differential equation case. Finally these results were extended to operators on real \hyperlink{Hilbertspace}{Hilbert spaces}, where it was shown that the \gls{srg} could be obtained using the results for complex \hyperlink{Hilbertspace}{Hilbert spaces} through the process of \hyperlink{complexification}{complexification}.

\bibliographystyle{amsplain}
\bibliography{references}

\end{document}

\typeout{get arXiv to do 4 passes: Label(s) may have changed. Rerun}